\numberwithin{equation}{subsection}
\theoremstyle{definition}
\newtheorem{defi}{Definition}[section]
\newtheorem{exa}[defi]{Example}
\newtheorem{lem}[defi]{Lemma}
\newtheorem{propo}[defi]{Proposition}
\newtheorem{theo}[defi]{Theorem}
\newtheorem{coro}[defi]{Corollary}
\newtheorem{conj}[defi]{Conjecture}
\title{Ghost-OSD Method on Numerical Max-plus Algebra}
\author{Yohei Oshida}
\address{}
\email{gan8ponster33@gmail.com \\ mf20022@shibaura-it.ac.jp}
\keywords{Max-plus algebra,Combination of locations of zero lements, Diagonal component, Length of cycle, Python}
\date{Month, Day, Year}
\begin{document}

\begin{abstract}
In this paper, we introduce a method for reducing the calculation time of matrix exponentiation calculations on numerical max-plus algebra. In particular, we explain Ghost-OSD Method that can quickly calculate the powers of honest matrix A, where $A$ is a square matrix of order $2m+1$ with non-negative integer $m$, and the directed graph $G$ whose weights are the matrix components of $A$ satisfies  $i-2,i-1 \to i \to i+1,i+2$ for each vertice $i$. This method is based on calculation of the location of zero elements of power of a matrix $A$ and theory of pros and cons of existence of cycle of $G$. 

%Through the introduction of Ghost-OSD Method, we want to recognize that it is very important to look for patterns in the positions of zero components for reducing the calculation time of matrix exponentiation calculations on numerical max-plus algebra.

%that the subgraph of $G$ contains a cycle graph and the in- and out-degrees of all vertices are 2.
\end{abstract}

\maketitle

\setcounter{tocdepth}{1}
\tableofcontents{}

\section{Introduction}
We assume that readers are familiar with \cite{famous1} or \cite{famous2}.  Let $N$ be non-negative integer. Let $[*]_{i,j}:=[*]_{i-N,j}$ for $i>N$ and $[*]_{i,j}:=[*]_{i,j-N}$ for $j>N$, where $*$ is the matrix. For $r=2,3,\cdots,N$, let $A$ be the element of $\mathbb{R}_{\text{max}}^{N \times N}$ which satisfies as follows.
\begin{equation}\label{Honest}
[A]_{i,j}
\begin{cases}
\neq - \infty & \text{for $i=j+1,j+2,\cdots,j+r$}, \\
= - \infty & \text{otherwise}.
\end{cases}
\end{equation}
We call the above matrix $A$ Honest matrix. Below, we put $r=2$ and $N=2m+1$ for non-negative integer $m$ with $m \geq 2$. Moreover, we assume that the matrix $A$ satisfies $[A]_{j+1,j} > 0$ and  $[A]_{j+2,j} < 0$ for $j=1,2,\cdots,2m+1$.
In general, the computational time for the power calculation of a sufficiently large matrix is enormous. This may also be the case in the case of Honest matrix $A$. However, we confirmed a method for high-speed calculation of the power calculation 
\begin{equation}
X(k):=A^{\otimes (k+1)} \nonumber
\end{equation}
for non-negative integer $k$. It is called Ghost-OSD Method below.\par

\begin{theo}[Ghost-OSD Method]\label{Main}
\it{On the calculation of $X(k)$ for $k=1,2,\cdots,2m,\cdots$, 
we can shorten the number of calculations of $X(k)$ by following steps $(1)$ to $(5)$ below.\par

$(1)$Let us denote 
\begin{equation}
\beta_1(k)=2(m+1)-2k-3 \nonumber 
\end{equation}
and 
\begin{equation}
\beta_2(k)=2(m+1)-k. \nonumber 
\end{equation}
Then, for $k=1,2,\cdots,m-1$, we have
\begin{equation}
[X(k)]_{i,j}=
\begin{cases}
\max \{ [X(k-1)]_{j+1,j} + [A]_{j+1,j} , [X(k-1)]_{j+2,j} + [A]_{j+2,j} \} & \text{if $\beta_1(k)+i \leq j \leq \beta_2(k)+i-2$}, \\
- \infty & \text{otherwise}. \\
\end{cases}
\nonumber 
\end{equation}
for each i.
\par
$(2)$For $k=m,m+1,\cdots,2m-1$, we can compute as follows.
\begin{eqnarray}
[X(k)]_{i,j}=\max \{ [X(k-1)]_{j+1,j} + [A]_{j+1,j} , [X(k-1)]_{j+2,j} + [A]_{j+2,j} \} \nonumber
\end{eqnarray}
\par
$(3)$For $k=2m$, we can compute as follows:
\begin{equation}
[X(k)]_{i,j}=
\begin{cases}
[A]_{1,2m+1}+ \displaystyle \sum_{p=1}^{2m} {[A]}_{p+1,p} & \text{if $i=j$} \\
\max \{ [X(k-1)]_{j+1,j} + [A]_{j+1,j} , [X(k-1)]_{j+2,j} + [A]_{j+2,j} \} & \text{if $i \neq j$}
\end{cases}
\nonumber
\end{equation}
\par
$(4)$Let $\alpha \in \mathbb{Z}_{>1}$ and $\beta$ be the non-negative intger with $0 \leq \beta \leq k$. We assume that
\begin{eqnarray}
(\alpha-1) * (2m+1) -1 \leq k \leq \alpha * (2m+1) -1. \nonumber
\end{eqnarray}
Then, we can compute as follows:
\begin{eqnarray}
[X(k)]_{i,j}=\max \{ [X(k-1)]_{j+1,j} + [A]_{j+1,j} , [X(k-1)]_{j+2,j} + [A]_{j+2,j} \} 
\end{eqnarray}
\par

$(5)$Let $k=\alpha * (2m+1) -1$. If the matrix $X(2m)$ satisfies the apex property
\begin{equation}\label{TopLaw}
[X(2m)]_{i,j} \leq [A]_{1,2m+1}+ \displaystyle \sum_{p=1}^{2m} {[A]}_{p+1,p},
\end{equation}
then we can compute $X(k)$ as follows.
\begin{equation}\label{TopLawC}
[X(k)]_{i,j}
=
\begin{cases}
\alpha * \Biggr ( [A]_{1,2m+1}+ \displaystyle \sum_{p=1}^{2m} {[A]}_{p+1,p}, \Biggr ) & \text{if $i=j$} \\
\max \{ [X(k-1)]_{j+1,j} + [A]_{j+1,j} , [X(k-1)]_{j+2,j} + [A]_{j+2,j} \} & \text{if $i \neq j$}.
\end{cases}
\end{equation}
On the other hands, if the matrix $A$ does not satisfies $(\ref{TopLaw})$, we 
modify $(\ref{TopLawC})$ as follows.
\begin{eqnarray}\label{TopLawCC}
[X(k)]_{i,j}
=
\max \{ [X(k-1)]_{j+1,j} + [A]_{j+1,j} , [X(k-1)]_{j+2,j} + [A]_{j+2,j} \} 
\end{eqnarray}
}
\end{theo}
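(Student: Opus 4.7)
The plan is to interpret $X(k) = A^{\otimes(k+1)}$ as the tropical power, so that $[X(k)]_{i,j}$ equals the maximum total weight of a directed walk of length $k+1$ in $G$ from $j$ to $i$. Every such walk is an ordered sequence of step-$1$ edges ($p \to p+1$, weight $[A]_{p+1,p} > 0$) and step-$2$ edges ($p \to p+2$, weight $[A]_{p+2,p} < 0$), all indices read modulo $2m+1$. The two-term recurrences appearing in parts (1), (2), (4), (5) are then nothing other than the entrywise identity $X(k) = X(k-1)\otimes A$ expanded using the two non-$-\infty$ entries per column of $A$; I read the second subscripts of $X(k-1)$ as $i,j+1$ and $i,j+2$ rather than the displayed $j+1,j$ and $j+2,j$, since the latter do not depend on $i$ and are presumably a typo.

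For part (1), I induct on $k$ using the walk interpretation. A walk of length $k+1$ has net displacement in $\{k+1,\ldots,2(k+1)\}$, and for $1\le k\le m-1$ this set of $k+2\le m+1$ integers injects into $\mathbb{Z}/(2m+1)$. Reducing modulo $2m+1$ yields exactly the band $2m - 2k - 1 \le j - i \le 2m - k$, which is $\beta_1(k)+i \le j \le \beta_2(k)+i-2$. Outside the band no walk exists, so the entry is $-\infty$; inside, the two-term recurrence supplies it. Parts (2) and (4) are the same recurrence written with no sparsity claim, since past $k = m$ the reachable displacements can exceed one cycle length and the support becomes dense.

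Part (3) is the combinatorial core. For $i=j$, a closed walk of length $2m+1$ using $a$ step-$1$ edges and $b$ step-$2$ edges satisfies $a+b = 2m+1$ and $a + 2b \equiv 0 \pmod{2m+1}$; subtracting forces $b \equiv 0 \pmod{2m+1}$, so $(a,b) \in \{(2m+1, 0), (0, 2m+1)\}$. The first is the unique Hamiltonian step-$1$ cycle, of positive total weight $[A]_{1,2m+1} + \sum_{p=1}^{2m}[A]_{p+1,p}$; the second is the step-$2$ Hamiltonian cycle (well-defined because $\gcd(2, 2m+1) = 1$), of negative total weight. Hence $[X(2m)]_{j,j}$ equals the cycle weight $C$, and for $i\ne j$ the recurrence gives the entry directly. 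Part (5) reduces to induction on $\alpha$. Using commutativity of tropical powers, $X(\alpha(2m+1)-1) = X(2m)^{\otimes\alpha}$. Under the apex hypothesis~$(\ref{TopLaw})$ we have $[X(2m)]_{i,j}\le C$ for every $(i,j)$, with equality on the diagonal by part (3). I prove inductively that
\[
[X(2m)^{\otimes\alpha}]_{i,j} \le \alpha C, \qquad [X(2m)^{\otimes\alpha}]_{i,i} = \alpha C.
\]
The upper bound follows from
\[
[X(2m)^{\otimes\alpha}]_{i,j} = \max_{\ell}\bigl\{[X(2m)^{\otimes(\alpha-1)}]_{i,\ell} + [X(2m)]_{\ell,j}\bigr\} \le (\alpha-1)C + C,
\]
and the diagonal lower bound is obtained by taking $\ell = i$. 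When~$(\ref{TopLaw})$ fails one has no such closed form for the diagonal, and the bare recurrence~$(\ref{TopLawCC})$ is all that remains.

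The main obstacle is the diagonal identity inside part (5): the induction goes through only because the apex hypothesis forces the row and column maximum of $X(2m)$ to lie on the diagonal, so that the bound $(\alpha-1)C + C$ is simultaneously attained at $\ell = i$ by both factors. Once this observation is in hand the step is routine. A secondary nuisance is the cyclic index book-keeping in part (1), which must be handled carefully in order to align the walk-displacement interval $[k+1, 2(k+1)]$ modulo $2m+1$ with the precise endpoints $\beta_1(k)$ and $\beta_2(k)$ stated in the theorem.
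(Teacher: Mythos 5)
Your proof is correct, and its skeleton --- the two-term recurrence coming from the two non-$\varepsilon$ entries per column of $A$, a band statement for $k\le m-1$, the weight-$L$ cycle of length $2m+1$ for the diagonal at $k=2m$, and induction on $\alpha$ for part (5) --- is the same as the paper's; you also correctly read the recurrence as $\max\{[X(k-1)]_{i,j+1}+[A]_{j+1,j},\,[X(k-1)]_{i,j+2}+[A]_{j+2,j}\}$, which is what Lemma \ref{HelpCal} intends despite the index typo. The genuine difference lies in how the two combinatorial inputs are justified. For part (1) the paper propagates the support of the first row of $A^{\otimes k}$ explicitly (Lemma \ref{CPNo}, Corollary \ref{CPNocoro}) and then transfers to an arbitrary row $i$ by a cyclic relabelling map; your walk-displacement argument ($i-j\equiv d \pmod{2m+1}$ with $k+1\le d\le 2k+2$, no wrap-around when $k\le m-1$) gives the band for all rows at once and dispenses with the rotation step. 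For part (3) the paper classifies the elements of $P(i,i;2m+1)$ into the two Hamiltonian cycles via Proposition \ref{NHRR}, leaning on the no-short-cycle Lemma \ref{CPNo}; your congruence $a+b=2m+1$, $a+2b\equiv 0\pmod{2m+1}$ forces $b\in\{0,2m+1\}$ directly, and the sign hypotheses $[A]_{j+1,j}>0$, $[A]_{j+2,j}<0$ then select the step-1 cycle of weight $L$, which is cleaner and tighter than the paper's exclusion of step-2 edges. Your induction for part (5) is the paper's computation (\ref{TopLaw1})--(\ref{TopLaw2}) carried out for general $\alpha$ rather than $\alpha=2$ plus an appeal to induction. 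What the paper's longer route additionally yields is Lemma \ref{CPNo} itself (no cycles of length $\le m$), which is reused later in the paper but, as your argument shows, is not needed for Theorem \ref{Main}. One sentence worth adding to your part (1): every displacement $d\in[k+1,2k+2]$ is realized by choosing $d-(k+1)$ step-2 edges, so inside the band the entries are indeed finite, recovering the count in Corollary \ref{CPNocoro}.
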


Although Theorem $\ref{Main}$ appears long and complex, most of it is merely composed of theory on understanding locations of $-\infty$ for matrixs $A$ and theory on maximum weight of cycles of length $2m+1$ of the graph $\mathcal{G}(A)$. In other words, two theories are important in Theorem $\ref{Main}$. And  we were able to confirmed that $X(k)$ could be computed quickly using Theorem $\ref{Main}$ by performing numerical calculations.

%By Theorem $\ref{Main}$, we can compute $X(k)$ high-speed for $k \in \mathbb{Z}_{>0}$. %In fact, it can be visually understood by looking at the Figure $1$. Figure $1$ show how the computation time of $A$, $A^2$, $\cdots$, $A^{\otimes (2m+1)}$ change when the size of the matrix is increased. The vertical axis represents the processing time of the calculation, and the unit is seconds. The horizontal axis is the value of $m$ for $2m+1$, and the initial value is $2$. The red line indicates a normal matrix calculation, and the blue line indicates a calculation using a theorem. The green line represents the error of the two calculations time.

%\begin{figure}[htbp]
%\begin{center}
%\includegraphics[width=155mm]{1.png}
%\caption{}
%\end{center}
%\end{figure}
\par

As an application of Ghost-OSD Method, for example, the following results exist.

\begin{theo}\label{main}
\it{
Let $A$ be Honest matrixs. Let $B$ be the diagonal matrix which satisfies $[B]_{i,i}=[B]_{j,j}$ for $i \neq j$. Then, for the matrix $D:=B \oplus A$, we have
}
\begin{eqnarray}
D^{\otimes n}=B^{\otimes n} \oplus \Biggr ( \bigoplus_{k=1}^{n-1} ( B^{\otimes n-k} \otimes A^{\otimes k} ) \Biggr ) \oplus A^{\otimes n}. \nonumber
\end{eqnarray}
\end{theo}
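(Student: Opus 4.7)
The plan is to recognize the identity as the max-plus analogue of the classical binomial expansion, in which the binomial coefficients collapse by the idempotency $x\oplus x=x$ of max-plus addition. The key structural fact is that $B$ is a scalar multiple of the max-plus identity matrix, so it commutes with every square matrix of the same size, and in particular with $A$.

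First I would set $b:=[B]_{1,1}$ and let $E$ denote the $(2m+1)\times(2m+1)$ max-plus identity matrix (diagonal entries $0$, off-diagonal entries $-\infty$). Since $B$ is diagonal with constant diagonal value $b$, one has $B=b\otimes E$, and therefore $B\otimes M=b\otimes M=M\otimes B$ for every square matrix $M$ of compatible size, together with $B^{\otimes j}=(jb)\otimes E$ for every $j\ge 0$. In particular $B$ commutes with $A$ and with each of its powers.

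Next I would prove by induction on $n\ge 1$ the auxiliary identity
\[
D^{\otimes n}=\bigoplus_{k=0}^{n} B^{\otimes (n-k)}\otimes A^{\otimes k},
\]
with the conventions $B^{\otimes 0}=A^{\otimes 0}=E$. The case $n=1$ is simply the definition of $D$. For the inductive step I would multiply both sides on the right by $D=B\oplus A$, distribute $\otimes$ over $\oplus$, absorb each trailing factor into the appropriate power by the commutativity noted above, and collapse the two overlapping contributions at each index $k$ using idempotency $x\oplus x=x$; a shift of summation index then produces the corresponding formula at $n+1$.

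Finally I would peel off the boundary summands $k=0$ and $k=n$, which equal $B^{\otimes n}$ and $A^{\otimes n}$ respectively, leaving the middle range $\bigoplus_{k=1}^{n-1}$ in the form stated. There is no real obstacle here: the argument is purely formal, uses neither the Honest structure of $A$ nor Theorem~\ref{Main}, and Ghost-OSD becomes relevant only at the later stage of numerically evaluating the factors $A^{\otimes k}$ on the right-hand side.
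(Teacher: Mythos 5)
Your proposal is correct and follows essentially the same route as the paper: an induction on $n$ that distributes $D=B\oplus A$ over the assumed decomposition, commutes $B$ (constant diagonal, hence a max-plus scalar multiple of the identity) past powers of $A$ exactly as in the paper's Lemma~\ref{kakan}, and merges the overlapping terms by idempotency $x\oplus x=x$. The only cosmetic differences are that you multiply by $D$ on the right rather than the left and phrase commutativity via $B=b\otimes E$ instead of the entrywise computation, neither of which changes the argument.
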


In this paper, we mainly prove Theorem $\ref{Main}$ and we introdeuce numerical calculation results by using Theorem $\ref{Main}$. The structure of this paper is as follows. In section 2, at first, here's what we need to prove Theorem $\ref{Main}$. Next, we prove Theorem $\ref{Main}$. In section 3, we introduce the results of numerical calculations using Python on Visual Studio Code. If you want the actual source code used, please contact us.

Unless otherwise described, we denote
\begin{eqnarray}
L=[A]_{1,2m+1}+\sum_{p=1}^{2m} {[A]}_{p+1,p}, \quad \varepsilon = - \infty,
\end{eqnarray}
and if there exists $\oplus$ and $\otimes$ in the formula, we may write $*$ as multiplication.

\section{Proof}

 In this section, at first, here's what we need to prove Theorem $\ref{Main}$. Next, we prove Theorem $\ref{Main}$. First, let's consider the rewriting of the honest matrix $A$. We consider the vector
\begin{equation}\label{vec1}
[u_i(a,b)]_j=
\begin{cases}
\varepsilon & \text{if $j=1,\cdots,i,i+3,\cdots,2m+1$}, \\
a & \text{if $j=i+1,$} \\
b & \text{if $j=i+2.$}
\end{cases}
\end{equation}
and 
\begin{equation}\label{vec2}
[u_{2m}(a,b)]_j=
\begin{cases}
a & \text{if $j=1$}, \\
\varepsilon & \text{if $j=2,\cdots,2m,$} \\
b & \text{if $j=2m+1.$}
\end{cases},\quad 
[u_{2m+1}(a,b)]_j=
\begin{cases}
a & \text{if $j=1$}, \\
b & \text{if $j=2$}, \\
\varepsilon & \text{if $j=3,\cdots,2m+1.$}
\end{cases}
\end{equation}
By using $(\ref{vec1})$ and $(\ref{vec2})$, the matrix $g(A)$ can rewrite as follows.
\begin{eqnarray}\label{vec3}
A=(u_1(a_{1,1},a_{1,2}),u_2(a_{2,1},a_{2,2}),\cdots,u_{2m+1}(a_{2m+1,1},a_{2m+1,2})),
\end{eqnarray}
where $a_{i,j} \neq \varepsilon$ for $i=1,2,\cdots,2m+1$ and $j=1,2$. Note that $a_{1,2},\cdots,a_{2m+1,2} < 0$. Then we want to consider support of matrix as follows.

\begin{defi}[Support of vector and matrix]
\it{
Let $A \in \mathbb{R}^{n \times n}_{\text{max}}$, where $n \in \mathbb{Z}_{>0}$. Let the i-column vector of $A$ be $v_i$. Then, support of $A$ is defined as
}
\begin{eqnarray}
\text{\rm{supp}}(A)=\max_{1 \leq i \leq n} \biggr \{ \# \{ j \mid [v_i]_{j} \neq \varepsilon \}  \biggr \} \nonumber
\end{eqnarray}
\end{defi}

\begin{exa}Let $B$ be the matrix
$
\begin{bmatrix}
3 & \varepsilon & 4 \\
\varepsilon & 1 & -2 \\
\varepsilon & \varepsilon & \varepsilon 
\end{bmatrix}
$
. For matrixs $B$, we have
\begin{eqnarray}\label{Example_1}
\# \{ j \mid [(3,\varepsilon,\varepsilon)^T]_{j} \neq \varepsilon \}
=1,
\nonumber \\
\# \{ j \mid [(\varepsilon,1,\varepsilon)^T]_{j} \neq \varepsilon \}
=1,
\nonumber \\
\# \{ j \mid [(4,-2,\varepsilon)^T]_{j} \neq \varepsilon \}
=2.
\nonumber
\end{eqnarray}
Hence, we have 
\begin{equation}
\text{supp}(B)=2.
\end{equation}
\end{exa}

In general, support of the matrix $*$ is not always possible to satisfy
\begin{equation}
\min_{1 \leq i \leq n} \biggr \{ \# \{ j \mid [v_i]_{j} \neq \varepsilon \}  \biggr \}=\text{supp}(A).
\end{equation}
However, Honest matrix $A$ alway satisfies the above property. In particular, it follows the below result.

\begin{lem}\label{HWC}[Helping with calculation]
\it{
The honest matrix $A$ alway satisfies
}
\begin{eqnarray}\label{vec4}
\text{\rm{supp}}(A)=2.
\end{eqnarray}
\end{lem}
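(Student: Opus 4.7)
The plan is to read off the claim directly from the rewriting of $A$ in $(\ref{vec3})$ and the definitions $(\ref{vec1})$, $(\ref{vec2})$. First, I would recall that by $(\ref{Honest})$ with $r=2$ and $N=2m+1$, together with the cyclic index convention $[*]_{i,j}:=[*]_{i-N,j}$ for $i>N$, the only entries of column $j$ of $A$ that are different from $\varepsilon$ are the two entries indexed by the rows $j+1$ and $j+2$ (read modulo $2m+1$). So for every column $j$,
\begin{equation}
\#\{\,i \mid [A]_{i,j}\neq\varepsilon\,\}=2. \nonumber
\end{equation}

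Next I would verify this column-by-column using $(\ref{vec1})$ and $(\ref{vec2})$. For $1\leq i\leq 2m-1$ the vector $u_i(a_{i,1},a_{i,2})$ has exactly two non-$\varepsilon$ entries, namely $a_{i,1}$ in position $i+1$ and $a_{i,2}$ in position $i+2$; the remaining components are $\varepsilon$ by the first line of $(\ref{vec1})$. For the two boundary columns, the definition $(\ref{vec2})$ gives the same count: $u_{2m}(a_{2m,1},a_{2m,2})$ has non-$\varepsilon$ entries only at positions $1$ and $2m+1$, while $u_{2m+1}(a_{2m+1,1},a_{2m+1,2})$ has non-$\varepsilon$ entries only at positions $1$ and $2$. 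So each of the $2m+1$ column vectors has exactly two non-$\varepsilon$ entries.

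Taking the maximum over all columns then yields $\text{supp}(A)=2$, as claimed in $(\ref{vec4})$. I would also observe, in passing, that the minimum over all columns of the same cardinality equals $2$ as well, which is the additional uniformity property emphasised in the paragraph preceding the lemma; this is simply the statement that the above count is constant in $j$, not just bounded by $2$.

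There is no real obstacle here: the argument is bookkeeping on the index set, and the only mildly subtle point is making sure the cyclic shift convention $[*]_{i,j}:=[*]_{i-N,j}$ is applied correctly to the two wrap-around columns $j=2m$ and $j=2m+1$, which is precisely why the definition is split into $(\ref{vec1})$ and $(\ref{vec2})$.
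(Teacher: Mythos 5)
Your proposal is correct and follows essentially the same route as the paper: both decompose $A$ into its column vectors $u_1,\dots,u_{2m+1}$ via $(\ref{vec1})$--$(\ref{vec3})$, observe that each column has exactly two non-$\varepsilon$ entries (at positions $i+1,i+2$ modulo $2m+1$, with the wrap-around columns handled by $(\ref{vec2})$), and conclude $\text{supp}(A)=2$ by taking the maximum over columns. Your added remark that the minimum over columns also equals $2$ matches the uniformity point made in the paragraph preceding the lemma, so nothing is missing.
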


\begin{proof}Since the matrix $A$ can be denoted as $(\ref{vec3})$, we have
\begin{eqnarray}
\{ j \mid [u_{2m}]_j \neq \varepsilon \}=\{1,2m+1\}, \quad \{ j \mid [u_{2m+1}]_j \neq \varepsilon \} =\{1,2\}
\end{eqnarray}
and
\begin{eqnarray}
\{ j \mid [u_i]_j \neq \varepsilon \} =\{i+1,i+2\}
\end{eqnarray}
for $i=1,2,\cdots,2m-1$. Hence, we have
\begin{eqnarray}
2 =\# \{ j \mid [u_1]_j \neq \varepsilon \} \leq \# \{ j \mid [u_2]_j \neq \varepsilon \} \leq \cdots \leq \# \{ j \mid [u_{2m+1}]_j \neq \varepsilon \}= 2 
\end{eqnarray}
Therefore, the equation $(\ref{vec4})$ holds.
\end{proof}

By $(\ref{HWC})$, we can quickly calculate $X(1),\cdots,X(2m)$ in terms of computer calculations. In fact, for $k=1,\cdots,2m$, we can compute $X(k)$ as follows:
\begin{eqnarray}\label{HWC_eq}
&&[X(k)]_{i,j} \nonumber \\
&=&{[X(k-1) \otimes X(0)]}_{i,j} \nonumber \\
&=&{[X(k-1) \otimes A]}_{i,j} \nonumber \\
&=& \max_{1 \leq k \leq 2m+1} [X(k-1)]_{i,k} + [u_j]_k \nonumber \\
&=& \max \{ [X(k-1)]_{i,i+1} + [u_j]_{i+1} , [X(k-1)]_{i,i+2} + [u_j]_{i+2} , \varepsilon \} \nonumber \\
&=& \max \{ [X(k-1)]_{i,i+1} + [u_j]_{i+1} , [X(k-1)]_{i,i+2} + [u_j]_{i+2} \}
\end{eqnarray}
Hence, the equation $(\ref{HWC_eq})$ can be summarized as follows.

\begin{lem}\label{HelpCal}
\it{
Let $(2m+1)+n:=n$. Then, for non-negative integer $k$ with $k \geq 1$, we have
\begin{eqnarray}\label{HWC1}
[X(k)]_{i,j} = \max \{ [X(k-1)]_{i+1,i} + [u_1]_{i+1} , [X(k-1)]_{i+2,i} + [u_1]_{i+2} \}
\end{eqnarray}
In particular, if $[u_j]_{i+1}=[u_j]_{i+2}$, then we have
\begin{eqnarray}\label{HWC2}
[X(k)]_{i,j} = \max \{ [X(k-1)]_{i+1,i} , [X(k-1)]_{i+2,i} \} + [u_j]_{i+1}
\end{eqnarray}
}
\end{lem}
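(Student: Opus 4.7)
The plan is to derive both identities directly from $X(k)=X(k-1)\otimes A$, exploiting the column sparsity of $A$ already pinned down in Lemma~\ref{HWC}.

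First, I would expand the $(i,j)$-entry of the max-plus product,
\begin{equation*}
[X(k)]_{i,j} \;=\; \max_{1\le p\le 2m+1}\bigl\{[X(k-1)]_{i,p}+[A]_{p,j}\bigr\},
\end{equation*}
and rewrite $[A]_{p,j}=[u_j]_p$ using the column decomposition~(\ref{vec3}). Lemma~\ref{HWC} guarantees that each column $u_j$ has exactly two non-$\varepsilon$ entries, so all but two summands in the max equal $\varepsilon$ and can be dropped. Reading off which two positions survive for each $j$ from definitions~(\ref{vec1}) and~(\ref{vec2}) produces the two-term maximum of~(\ref{HWC1}); this is essentially the derivation already sketched in~(\ref{HWC_eq}), now packaged as a lemma.

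For the ``in particular'' clause, I would invoke the distributivity of ordinary addition over max in $\mathbb{R}_{\max}$: when the two summands $[u_j]_{i+1}$ and $[u_j]_{i+2}$ coincide, their common value factors out of the outer maximum. This is a one-line rearrangement once~(\ref{HWC1}) is in hand, and it does not rely on any further structural property of $A$.

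The only subtle point, and the piece I would handle with a short case check, is the cyclic indexing at the boundary columns $j=2m$ and $j=2m+1$, where by~(\ref{vec2}) the two surviving positions are $\{1,2m+1\}$ and $\{1,2\}$ respectively rather than the generic $\{j+1,j+2\}$. The convention $(2m+1)+n:=n$ stated at the start of the lemma is precisely what reconciles these boundary indices with the interior formula, so the ``hard part'' reduces to verifying that the wrap-around makes the uniform expression valid for all $j$\textemdash a bookkeeping check rather than a genuine obstacle.
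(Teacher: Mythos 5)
Your proposal is correct and is essentially identical to the paper's own argument: the paper proves Lemma~\ref{HelpCal} precisely by expanding $[X(k)]_{i,j}=[X(k-1)\otimes A]_{i,j}$, substituting the column vectors $u_j$ from (\ref{vec3}) and using Lemma~\ref{HWC} to discard the $\varepsilon$ summands, exactly as in your plan (this is the computation (\ref{HWC_eq}), which the lemma merely repackages). Your added remarks on factoring out the common summand and on the wrap-around convention $(2m+1)+n:=n$ are the same routine steps the paper leaves implicit.
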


We can confirm that  we can quickly calculate $X(1),\cdots,X(2m)$ and $X(2m+1)$, $X(2m+2)$, $\cdots$ by $(\ref{HWC1})$.
Moreover, as an attempt to reduce the computation time, we consider a calculation method using graph theory.

At First, the graph $\mathcal{G}(A)$ has the following properties

\begin{lem}\label{CPNo}
\it{
Let $m$ be non-negative integer $m \geq 2$. Let $A \in \mathbb{R}^{2m+1 \times 2m+1}_{max}$ be honest matrix. Then, we have
}
\begin{eqnarray}\label{CPNoEq1}
\# P(i,i;1) = \# P(i,i;2) = \cdots = \# P(i,i;m)=0.
\end{eqnarray}
\end{lem}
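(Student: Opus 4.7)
The plan is to argue purely graph-theoretically, using only the cyclic structure of $\mathcal{G}(A)$: every edge advances the vertex label by $1$ or $2$ modulo $2m+1$, so a closed walk of length $k$ at $i$ forces an arithmetic identity that cannot hold for small $k$.

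First I would unpack the graph from $(\ref{Honest})$ together with the cyclic convention $[*]_{i,j}:=[*]_{i-N,j}$: with $r=2$ and $N=2m+1$, the only finite entries of $A$ are $[A]_{j+1,j}$ and $[A]_{j+2,j}$ (indices read modulo $2m+1$). Hence each vertex $j$ of $\mathcal{G}(A)$ has exactly two out-edges, one to $j+1$ and one to $j+2$, and in particular every edge shifts the current vertex by either $+1$ or $+2$ mod $2m+1$.

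Next I would encode a path $\pi\in P(i,i;k)$ by its sequence of step sizes $(s_1,\ldots,s_k)\in\{1,2\}^k$. Letting $b:=\#\{\ell:s_\ell=2\}$ and $k-b$ the number of $+1$-steps, the terminal vertex of $\pi$ starting at $i$ is $i+(k-b)+2b = i+k+b$ modulo $2m+1$. The closure condition $i+k+b\equiv i\pmod{2m+1}$ therefore amounts to requiring that $k+b$ be a positive multiple of $2m+1$.

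Finally I would exploit $0\le b\le k$, which gives $k\le k+b\le 2k$. When $1\le k\le m$ this forces $k+b\le 2m<2m+1$, so $k+b$ cannot be a positive multiple of $2m+1$. Hence no encoding exists, $P(i,i;k)=\varnothing$, and $(\ref{CPNoEq1})$ follows. The only subtlety I anticipate is bookkeeping — confirming that ``length $k$'' in $P(i,i;k)$ means exactly $k$ edges (consistent with the expansion of $[A^{\otimes k}]_{i,i}$ used in Lemma~\ref{HelpCal}) and handling the cyclic index convention carefully — so I do not expect any substantive obstacle.
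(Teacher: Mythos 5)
Your proof is correct, but it follows a genuinely different route from the paper. You encode a closed walk in $P(i,i;k)$ by its step sizes in $\{1,2\}$ (legitimate, since the only finite entries of the honest matrix with $r=2$ are $[A]_{j+1,j}$ and $[A]_{j+2,j}$ read cyclically, so every edge of $\mathcal{G}(A)$ advances the vertex by $+1$ or $+2$ modulo $2m+1$), and then observe that the total displacement $k+b$, with $b$ the number of $+2$-steps, satisfies $0<k+b\le 2k\le 2m<2m+1$ for $1\le k\le m$, so it can never be a positive multiple of $2m+1$; hence no such closed walk exists, uniformly in $i$. The paper instead normalizes all finite entries to $1$, iteratively tracks the support (the non-$\varepsilon$ positions) of $v\otimes A^{\otimes k}$ for the first row $v$, concludes $[A^{\otimes \ell}]_{1,1}=\varepsilon$ for $\ell\le m$, and then transfers this from vertex $1$ to every vertex $j$ via the rotation map $F_r$ exploiting the cyclic symmetry of $\mathcal{G}(A)$. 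Your modular-arithmetic argument is shorter, needs no normalization or relabelling step, treats all diagonal entries at once, and as a bonus makes the sharpness transparent (a closed walk needs $k+b\equiv 0 \pmod{2m+1}$, so the minimal cycle length is $m+1$); the paper's longer computation, on the other hand, produces the explicit support formula $(\ref{CPNoImportant})$, which is reused afterwards in Corollary $\ref{CPNocoro}$ and in the proof of Theorem $\ref{Main}$, so the extra work there is not wasted even though the lemma itself does not require it. Your flagged bookkeeping point is harmless: length $k$ in $P(i,i;k)$ does mean exactly $k$ edges, consistent with the expansion of $[A^{\otimes k}]_{i,i}$.
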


\begin{proof}
Suppose that ${[A]}_{i,j}=1$ for $(i,j)$ which satisfies ${[A]}_{i,j} \neq \varepsilon$. Note that we can assume this only if we only need to discuss that the diagonal components are not $\varepsilon$. Let us denote 
\begin{eqnarray}\label{basic}
v=([A]_{1,1},[A]_{1,2},\cdots,[A]_{1,2m+1}). \nonumber
\end{eqnarray}
The above equation can rewrite as follows.
\begin{equation}
[v]_j=
\begin{cases}
\varepsilon & \text{if $j=1,2,\cdots,2m-3,2m-2$}, \\
1 & \text{if $j=2m,2m+1$}.
\end{cases}
\end{equation}
Since we have $\{ j \mid [v]_j \neq \varepsilon \}=\{ 2m , 2m+1 \}$ by $(\ref{basic})$, we have 
\begin{eqnarray}
\{ j \mid [v]_j \neq \varepsilon \} \cap \{ j \mid [u_i]_j \neq \varepsilon \} \neq \emptyset \nonumber
\end{eqnarray}
for $i=2m-2,2m-1,2m$. Hence, we only calculate $v \otimes u_{2m-2}(1,1)$, $v \otimes u_{2m-1}(1,1)$ and $v \otimes u_{2m}(1,1)$ when we calculate  $v \otimes A$. The result of calculation of $v \otimes A$ is as follows: 
\begin{equation}
[v \otimes A]_j=
\begin{cases}
\varepsilon & \text{if $j=1,2,\cdots,2m-3,2m+1$}, \\
2 & \text{if $j=2m-2,2m-1,2m$}.
\end{cases}
\end{equation}
By using the above result, we have
\begin{eqnarray}
\{ j \mid [v \otimes A]_j \neq \varepsilon \}=\{ 2m-2,2m-1,2m \}. \nonumber
\end{eqnarray}
Then we have 
\begin{eqnarray}
\{ j \mid [v \otimes A]_j \neq \varepsilon \} \cap \{ j \mid [u_{i}]_j \neq \varepsilon \} \neq \emptyset \nonumber
\end{eqnarray}
for $i=2m-4,2m-3,2m-2,2m-1$. Hence, we obtain the following result.
\begin{equation}
[( v \otimes A ) \otimes A]_j=[v \otimes A^{\otimes 2}]_j=
\begin{cases}
\varepsilon & \text{if $j=1,2,\cdots,2m-4,2m,2m+1$}, \\
3 & \text{if $j=2m-4,2m-3,2m-2,2m-1$}.
\end{cases}
\end{equation}
Let $n:=2m$. From the above discussion, for $k=1,2,\cdots,m-1$, we have 
\begin{eqnarray}
\{ j \mid [v \otimes A^{\otimes k}]_j \neq \varepsilon \} \cap \{ j \mid [u_{i}]_j \neq \varepsilon \} \neq \emptyset \nonumber
\end{eqnarray}
where $i=n-(k-1),n-k,n-(k+1),\cdots,n-2k$, we obtain
\begin{eqnarray}\label{CPNoImportant}
[v \otimes A^{\otimes k}]_j=
\begin{cases}
\varepsilon & \text{if $j=1$}, \\
\varepsilon & \text{if $j \notin \{ 1 , n-(k-1),n-k,n-(k+1),\cdots,n-2k \}$}, \\
k+1 & \text{if $j=n-(k-1),n-k,n-(k+1),\cdots,n-2k$}.
\end{cases}
\end{eqnarray}
If $k=m$, then we have $[v \otimes A^{\otimes m}]_1 \neq \varepsilon$. Hence, we have the following result:
\begin{equation}
[v \otimes A^k]_1=
\begin{cases}
\varepsilon & \text{if $k=1,2,\cdots,m-1$}, \\
k+1 & \text{if $k>m-1$}.
\end{cases}
\end{equation}
Thus, it follows from the above equation that we obtain
\begin{eqnarray}\label{cycleEq2}
[A]_{1,1}=[A^{\otimes 2}]_{1,1}=\cdots=[A^{\otimes m}]_{1,1}=\varepsilon
\end{eqnarray}
Therefore, it follows that $P(1,1;k)=\emptyset$ for $k=1,\cdots,m$.
\par

Now, let  the function
\begin{eqnarray}
F_r : \{ 1 , 2 , \cdots , n \} \times \{ 1 , 2 , \cdots , n \} \to \{ 1 , 2 , \cdots , n \} \times \{ 1 , 2 , \cdots , n \} \nonumber
\end{eqnarray} 
be defined by $F_r(i,j):=(\sigma_r(i),\sigma_r(j))$, where
\begin{equation}
\sigma_r(i)=
\begin{cases}
i - (r - 1)  -1 & \text{if $i - (r - 1) \neq 1$}, \\
2m+1 & \text{if $i - (r - 1)  = 1$}.
\end{cases}
\end{equation}
Let $g(A')$ be the graph $(V,F_2(E))$. Note that the cycle of   vertex $2$ to $2$ of $g(A)$ equals the cycle of   vertex $1$ to $1$ of $g(A')$. 

\begin{figure}[hbtp]
\begin{tikzpicture}[scale=2.5]
\node (v1) [fill=white, draw, circle] at (1,1) {1};
\node (v2) [fill=white, draw, circle] at (0,0) {2};
\node (v3) [fill=white, draw, circle] at (0.5,-1) {3};
\node (v4) [fill=white, draw, circle] at (1.5,-1) {4};
\node (v5) [fill=white, draw, circle] at (2,0) {5};

\draw[->,color=red,>=stealth,thick](v1)--(v2);
\draw[->,>=stealth,thick](v1)--(v3);

\draw[->,>=stealth,thick](v2)--(v3);
\draw[->,color=red,>=stealth,thick](v2)--(v4);

\draw[->,>=stealth,thick](v3)--(v4);
\draw[->,>=stealth,thick](v3)--(v5);

\draw[->,>=stealth,thick](v4)--(v5);
\draw[->,color=red,>=stealth,thick](v4)--(v1);

\draw[->,>=stealth,thick](v5)--(v1);
\draw[->,>=stealth,thick](v5)--(v2);

%\node (v55) at (2.15,0) {};
%\node (v66) at (2.75,0) {};

%\draw[<->,>=stealth,thick](v55)--(v66);

\node (v6) [fill=white, draw, circle] at (4,1) {$\sigma_1(2)$};
\node (v7) [fill=white, draw, circle] at (3,0) {$\sigma_1(3)$};
\node (v8) [fill=white, draw, circle] at (3.5,-1) {$\sigma_1(4)$};
\node (v9) [fill=white, draw, circle] at (4.5,-1) {$\sigma_1(5)$};
\node (v10) [fill=white, draw, circle] at (5,0) {$\sigma_1(1)$};

\draw[->,>=stealth,thick](v6)--(v7);
\draw[->,color=red,>=stealth,thick](v6)--(v8);

\draw[->,>=stealth,thick](v7)--(v8);
\draw[->,>=stealth,thick](v7)--(v9);

\draw[->,>=stealth,thick](v8)--(v9);
\draw[->,color=red,>=stealth,thick](v8)--(v10);

\draw[->,>=stealth,thick](v9)--(v10);
\draw[->,>=stealth,thick](v9)--(v6);

\draw[->,color=red,>=stealth,thick](v10)--(v6);
\draw[->,>=stealth,thick](v10)--(v7);

\end{tikzpicture}
\end{figure}

By $F_r$ for $r \in [1,2m] \cap \mathbb{Z}$, we have
\begin{eqnarray}
[A']_{1,1}=[(A')^{\otimes 2}]_{1,1}=\cdots=[(A')^{\otimes m}]_{1,1}=\varepsilon. \nonumber
\end{eqnarray}
Hence, we have
\begin{eqnarray}
[A]_{2,2}=[A^{\otimes 2}]_{2,2}=\cdots=[A^{\otimes m}]_{2,2}=\varepsilon. \nonumber 
\end{eqnarray}
Thus, for $j=1,2,\cdots,2m+1$, we have the following result in a similar way.
\begin{eqnarray}
[A]_{j,j}=[A^{\otimes 2}]_{j,j}=\cdots=[A^{\otimes m}]_{j,j}=\varepsilon \nonumber 
\end{eqnarray}
Therefore, the equation $(\ref{CPNoEq1})$ holds.
\end{proof}

From Lemma \ref{CPNo} and equations $(\ref{CPNoImportant})$, we can immediately obtain the following result. What is important in this result is that the number of pair $(i,j)$ and combinations of $i$ and $j$ for which $[X(k)]_{i,j}$ is not $\varepsilon$ can be calculated completely.

\begin{coro}\label{CPNocoro}
\it{
$(1)$For $k=1,2,\cdots,m-1$, we have
\begin{eqnarray}
[A^{\otimes (k+1)}]_{1,2(m+1)-2k-3} = [A^{\otimes (k+1)}]_{1,2(m+1)-k} = \varepsilon
\end{eqnarray}
$(2)$For $k=1,2,\cdots,m-1$, we have
\begin{eqnarray}
\# \{ i \mid [A^{\otimes (k+1)}]_{1,i} \neq \varepsilon  \} = k+2.
\end{eqnarray}
}
\end{coro}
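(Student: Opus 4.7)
My plan is to read off both parts of the corollary directly from equation (\ref{CPNoImportant}), which already pins down the support of the first row of $A^{\otimes(k+1)}$; no new work beyond careful index translation should be needed.

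First I would observe that the convenient specialization $[A]_{i,j}=1$ used inside the proof of Lemma \ref{CPNo} changes only the numerical values of the products, never their status as $\varepsilon$ or non-$\varepsilon$. In max-plus arithmetic $[A \otimes B]_{i,j}=\varepsilon$ iff for every intermediate $k$ one has $[A]_{i,k}=\varepsilon$ or $[B]_{k,j}=\varepsilon$, so the support pattern of a product depends only on the support patterns of its factors. Since the vector $v$ used in that proof is precisely the first row of $A$, we have $[v \otimes A^{\otimes k}]_j = [A^{\otimes(k+1)}]_{1,j}$, and equation (\ref{CPNoImportant}) therefore applies verbatim to the first row of $A^{\otimes(k+1)}$ for the original honest matrix, not just for the $1$-weighted specialization.

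For part (1), I would set $n=2m$ and translate the two indices appearing in the claim: $2(m+1)-2k-3 = n-2k-1$ and $2(m+1)-k = n-k+2$. By (\ref{CPNoImportant}) the non-$\varepsilon$ positions in the first row form the set of consecutive integers $\{n-2k,\,n-2k+1,\,\ldots,\,n-k,\,n-k+1\}$, so $n-2k-1$ sits one step below the minimum and $n-k+2$ one step above the maximum, giving $\varepsilon$ for both entries. For part (2), the same set has cardinality $(n-k+1)-(n-2k)+1=k+2$, and for $k \leq m-1$ we have $n-2k \geq 2$, so the separately-$\varepsilon$ index $j=1$ does not overlap with this support; hence the count of non-$\varepsilon$ entries is exactly $k+2$. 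The only real obstacle is keeping the shifts $n=2m$ versus $2(m+1)$ straight, which is why the corollary is labelled as immediate rather than requiring a separate inductive argument.
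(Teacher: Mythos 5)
Your proposal is correct and follows essentially the same route as the paper: both read the claim directly off equation (\ref{CPNoImportant}), translating $2(m+1)-2k-3$ and $2(m+1)-k$ into the indices just outside the support $\{2m-2k,\ldots,2m-k+1\}$ for part (1), and counting that interval (checking it misses $j=1$) to get $k+2$ for part (2). Your extra remark that the weight-$1$ specialization in Lemma \ref{CPNo} only affects values and not the $\varepsilon$/non-$\varepsilon$ pattern is a welcome clarification of a point the paper leaves implicit, but it does not change the argument.
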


\begin{proof}Let $\beta_1(k)=2(m+1)-2k-3$ and $\beta_2(k)=2(m+1)-k$.\\

At first , we prove that $(1)$ of Corollary $\ref{CPNocoro}$ holds. By $(\ref{CPNoImportant})$, we have
\begin{eqnarray}
[A^{\otimes (k+1)}]_{1,(2m-2k)-1}=[A^{\otimes (k+1)}]_{1,(2m-(k-1))+1}=\varepsilon. \nonumber
\end{eqnarray}
Hence, we obtain
\begin{eqnarray}
[A^{\otimes (k+1)}]_{1,\beta_1(k)}=[v \otimes A^{\otimes (k+1)}]_{1,\beta_2(k)}=\varepsilon \nonumber
\end{eqnarray}
by the following calculation result:
\begin{eqnarray}
(2m-2k)-1&=&2m+(2-2)-2k-1=\beta_1(k). \nonumber \\
(2m-(k-1))+1&=&2m-k+2=\beta_2(k). \nonumber
\end{eqnarray}
\par
Next, we prove that $(2)$ of Corollary $\ref{CPNocoro}$ holds. Since we have $[A^{\otimes (k+1)}]_{1,i}\neq \varepsilon$ for 
\begin{eqnarray}
i=\beta_1(k)+1,\beta_1+2,\cdots,\beta_2(k)-2,\beta_2(k)-1 \nonumber
\end{eqnarray}
and we have
\begin{eqnarray}
[A^{\otimes (k+1)}]_{1,i}=
\begin{cases}
\varepsilon & \text{if $i=1,2,\cdots,\beta_1(k)$}, \\
\varepsilon & \text{if $i=\beta_2(k),\beta_2+1,\cdots,2m+1$}.
\end{cases}
, \nonumber
\end{eqnarray}
we can calculate $\# \{ i \mid [A^{\otimes (k+1)}]_{1,i} \neq \varepsilon  \}$ as follows.
\begin{eqnarray}
&&\# \{ i \mid [A^{\otimes (k+1)}]_{1,i} \neq \varepsilon  \} \nonumber \\
&=&\# \{ \beta_1(k)+1,\beta_1+2,\cdots,\beta_2(k) -2,\beta_2(k) -1 \} \nonumber \\
&=&\# \{ 1,2,\cdots,\beta_2(k) \} - \# \{ 1,2,\cdots,\beta_1(k) \} - \# \{ \beta_2(k) \} \nonumber \\
&=&\beta_2(k) - \beta_1(k) - 1 \nonumber \\
&=&( 2(m+1)-k ) - ( 2(m+1)-2k-3 ) - 1 \nonumber \\
&=&2(m+1)-k - 2(m+1) + 2k + 3 - 1 \nonumber \\
&=&k+2 \nonumber
\end{eqnarray}
\end{proof}

Second,  the graph $\mathcal{G}(A)$ has the following properties

\begin{propo}\label{NHRR}[Shortcut Relation]
%Non-Honest Route Relation
\it{
For non-negative integer $m$ with $m \geq 2$, let 
\begin{eqnarray}
\Gamma_m = \sup \{ | \gamma |_{w} \mid \gamma \in P( i , i ; 2m+1) \}
\end{eqnarray}
Then, we have $\Gamma_m=L$. In addition, if we have $\Gamma_m=| \gamma_m |_{w}$, then the edge $\{ i , i+2 \}$ can not be used to construct $\gamma_m$.
}
\end{propo}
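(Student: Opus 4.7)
The plan is to classify all cycles of length $2m+1$ in $\mathcal{G}(A)$ by a simple counting argument and then directly compute the weight in each resulting case. First I would observe that every edge of $\mathcal{G}(A)$ either advances the vertex index by one (a ``short'' edge $j \to j{+}1$ with positive weight $[A]_{j+1,j}$) or by two (a ``long'' edge $j \to j{+}2$ with negative weight $[A]_{j+2,j}$), where all indices are read cyclically modulo $2m+1$. If a cycle $\gamma \in P(i,i;2m+1)$ uses $k$ short edges and $\ell$ long edges, the length condition forces $k+\ell = 2m+1$, while the requirement that $\gamma$ close up at $i$ forces $k+2\ell \equiv 0 \pmod{2m+1}$.

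Next I would solve these two relations. Writing $k+2\ell = c(2m+1)$ for some positive integer $c$ and subtracting $k+\ell = 2m+1$ yields $\ell = (c-1)(2m+1)$. Since $0 \le \ell \le 2m+1$, the only possibilities are $(k,\ell) = (2m+1,0)$ or $(k,\ell) = (0,2m+1)$. Consequently every element of $P(i,i;2m+1)$ is either the ``all-short'' cycle $i \to i{+}1 \to \cdots \to i{-}1 \to i$ or the ``all-long'' cycle $i \to i{+}2 \to i{+}4 \to \cdots \to i$, the latter being well defined as a single simple cycle precisely because $\gcd(2,2m+1) = 1$.

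Then I would compute the two weights directly. The all-short cycle has weight $\sum_{p=1}^{2m+1} [A]_{p+1,p}$, independent of the starting vertex $i$ after a cyclic reindexing, and the cyclic convention identifies the $p=2m+1$ summand as $[A]_{1,2m+1}$, so this total equals precisely the constant $L$ from the introduction. The all-long cycle has weight $\sum_{p=1}^{2m+1} [A]_{p+2,p}$, which is strictly negative since each summand is negative by the standing hypothesis $[A]_{j+2,j} < 0$. Since $L > 0$ (a sum of positive terms), taking the supremum gives $\Gamma_m = L$, achieved uniquely by the all-short cycle; by construction this cycle contains no long edge $\{i,i+2\}$, which is the second claim.

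The bulk of the argument is straightforward combinatorial bookkeeping; the only real subtlety I expect is keeping the cyclic index identification consistent, in particular recognising that $[A]_{2m+2,2m+1}$ must be read as $[A]_{1,2m+1}$ so that the all-short cycle weight collapses neatly to $L$, and checking that the coprimality $\gcd(2,2m+1)=1$ is exactly what allows the all-long cycle to be a single simple cycle rather than decompose into several shorter ones.
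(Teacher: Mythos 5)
Your proof is correct, but it reaches the classification of $P(i,i;2m+1)$ by a different and cleaner route than the paper. The paper also reduces everything to the two candidate closed walks (its $p_1$, the all-long cycle, and $p_2$, the all-short cycle), but it gets there by invoking Lemma \ref{CPNo} (no cycles of length $\leq m$) to rule out walks containing proper subcycles, and then by a rather loose ``jumping over vertices'' argument to claim that a walk using an edge $\{i,i+2\}$ cannot have length $2m+1$ --- an argument that, read literally, sits awkwardly next to the fact that $p_1$ itself consists entirely of such edges. Your bookkeeping argument avoids all of this: counting $k$ short and $\ell$ long edges, the relations $k+\ell=2m+1$ and $k+2\ell\equiv 0 \pmod{2m+1}$ force $\ell\in\{0,2m+1\}$, which classifies the walks in one stroke without any appeal to Lemma \ref{CPNo}, and the coprimality remark $\gcd(2,2m+1)=1$ correctly explains why the all-long walk is a genuine closed walk of length $2m+1$. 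You also make explicit the sign comparison ($L>0$ while the all-long weight is a sum of $2m+1$ negative terms) that the paper leaves implicit when it writes $\sup\{|p_1|_w,|p_2|_w\}=|p_2|_w$, and this same comparison immediately yields the second claim, that a maximizer uses no edge of the form $\{i,i+2\}$. In short, your approach buys rigor and economy (no dependence on the earlier cycle-length lemma), while the paper's approach is organized to reuse Lemma \ref{CPNo}, which it needs elsewhere anyway.
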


\begin{proof}Let $p_1$ be the path defined by
\begin{eqnarray}
((1,3),(3,5),\cdots,(2m,2m+1),(2m+1,2),(2,4),\cdots,(2m-2,2m)(2m,1)). \nonumber
\end{eqnarray}
Let $p_2$ be the path defined by
\begin{eqnarray}
p_2=((1,2),(2,3),\cdots,(2m,2m+1),(2m+1,1)). \nonumber
\end{eqnarray}
Let $\gamma$ be the element of $P(i,i;2m+1)$. To prove Proposition $(\ref{NHRR})$, it is sufficient to show that $\gamma=p_1$ or $\gamma=p_2$ hold. Note that $\{ p_1 , p_2 \} \subset P(i,i;2m+1)$ holds.

We assume that $\gamma_1$ have one or more cycles. Then, it follows from Lemma $\ref{CPNo}$ that $\gamma$ satisfies $| \gamma |_{w} = | \gamma_1 |_{w} + | \gamma_2 |_{w}$, where $\gamma_1$ be the element of $P(i,i;| \gamma_1 |)$ with 
\begin{eqnarray}\label{NHRR_1}
m+1 \leq | \gamma_1 | \leq 2m+1
\end{eqnarray}
and $ \gamma_2$ be the cycle of $g(A)$. By $(\ref{NHRR_1})$, since we can confirm that $\gamma_2$ satisfies $| \gamma_2 |_{1} \leq m$, it follows from Lemma $\ref{CPNo}$ that $\gamma_2$ is not the cycle of $g(A)$. Hence,
it contradicts that  $\gamma$ is the  element of $P(i,i;2m+1)$. Thus, we have $| \gamma |_{w} = | \gamma_1 |_{w}$.

Now, we assume that  some of the edges needed to construct $\gamma$ are form of $\{ i , i+2 \}$, where $2m+2:=1$ and $2m+3:=2$. Then , we have the following result beacuse of jumping over one or more vertices on $\gamma$.
\begin{eqnarray}
m+1 \leq | \gamma |_1 < 2m+1 \nonumber
\end{eqnarray}
It is contradictory that $\gamma$ is the element of $P(i,i;2m+1)$.\par
Hence, $\gamma=p_1$ or $\gamma=p_2$ hold. In particular, we have
\begin{eqnarray}
\Gamma_m
&=& \sup \{ | \gamma |_{w} \mid \gamma \in P(i,i;2m+1) \} \nonumber \\
&=& \sup \{ | \gamma |_{w} \mid \gamma \in \{ p_1 , p_2 \} \} \nonumber \\
&=& \sup \{ | p_1 |_{w} , | p_2 |_{w} \} \nonumber \\
&=&| p_2 |_{w} \nonumber \\
&=&L. \nonumber
\end{eqnarray}
\end{proof}

Let's prove Theorem $\ref{Main}$ which is the main part of this paper

\begin{proof}[Proof of $(1)$ of Theorem $\ref{Main}$]By Corollary $\ref{CPNocoro}$, we have $[A^{\otimes (k+1)}]_{1,j} \neq$ for $j \in [\beta_1(k)+1,\beta_2(k) - 1]$ and we have $[A^{\otimes (k+1)}]_{1,j} = \varepsilon$ for $j \notin [\beta_1(k)+1,\beta_2(k) - 1]$. Let $f_r:[1,2m+1] \to [1,2m+1]$ be the function defined as follows for $r \in [1,2m]$.
\begin{eqnarray}[\text{Rotation of the graph $\mathcal{G}(A)$}]
\quad
f_r(*)=
\begin{cases}
*-r & \text{if $*-r \geq 1$}, \\
(2m+1) -  (*-r) & \text{if $*-r < 1$}.
\end{cases}
\nonumber
\end{eqnarray}
Let $\mathcal{G}(A')$ be the graph in which each vertex $i_0$ of $\mathcal{G}(A)$ is rewritten to $f_r(i_0)$. Then, since the path of vertexs $i_1$ to $i_2$ of $\mathcal{G}(A)$ is same to the path of vertexs $f_r(i_1)$ to $f_r(i_2)$ of $\mathcal{G}(A')$ from the viewpoint that the edge sets can be equated, the necessary and sufficient condition for $[A^{\otimes (k+1)}]_{i,j+(i-1)} \neq \varepsilon$ is $[A^{\otimes (k+1)}]_{1,j} \neq \varepsilon$. Hence, we have
\begin{eqnarray}
[A^{\otimes (k+1)}]_{i,j}=
\begin{cases}
[A^{\otimes k} \otimes A]_{i,j} \neq \varepsilon & \text{if $(i,j) \in [1,2m+1] \times [\beta_1(k) + i ,\beta_2(k) + i - 2]$,} \\
\varepsilon & \text{otherwise.} 
\end{cases}
,
\nonumber
\end{eqnarray}
where we calculate $\beta_1(k)+i$ as $2m+1-(\beta_1(k)+i)$ if $\beta_1(k)+i > 2m+1$ and $\beta_2(k)+i-2$ as $2m+1-(\beta_2(k)+i-2)$ if $\beta_2(k)+i-2 > 2m+1$.\\

In particular, for $(i,j) \in [1,2m+1] \times [\beta_1(k) + i ,\beta_2(k) + i - 2)]$, we have
\begin{equation}
[A^{\otimes (k+1)}]_{i,j}=\max \{ [A^{\otimes k}]_{j+1,j} + [A]_{j+1,j} , [A^{\otimes k}]_{j+2,j} + [A]_{j+2,j} \} \nonumber
\end{equation}
by Lemma $\ref{HelpCal}$.
\end{proof}

\begin{proof}[Proof of $(2)$ and $(4)$ of Theorem $\ref{Main}$]By Lemma $\ref{HelpCal}$, we have
\begin{equation}
[X(k)]_{i,j}=\max \{ [A^{\otimes k}]_{j+1,j} + [A]_{j+1,j} , [A^{\otimes k}]_{j+2,j} + [A]_{j+2,j} \}. \nonumber
\end{equation}
\end{proof}

\begin{proof}[Proof of $(3)$ of Theorem $\ref{Main}$]
By Proposition $\ref{NHRR}$, we have
\begin{equation}
[X(2m)]_{i,i}
=[A^{\otimes (2m+1)}]_{i,i}
=\max \{ | \gamma |_{w} \mid \gamma \in P( i , i ; 2m+1) \}
=L
=[A]_{1,2m+1}+ \displaystyle \sum_{p=1}^{2m} {[A]}_{p+1,p}. \nonumber
\end{equation}
On the other hand, for $i \neq j$, we have
\begin{equation}
[X(2m)]_{i,j}=\max \{ [X(2m-1)]_{j+1,j} + [A]_{j+1,j} , [X(2m-1)]_{j+2,j} + [A]_{j+2,j} \} \nonumber
\end{equation}
by Lemma $\ref{HelpCal}$. Hence, we obtain the  calculation result of $X(2m)$ as follows.
\begin{eqnarray}
[X(2m)]_{i,j}=
\begin{cases}
[A]_{1,2m+1}+ \displaystyle \sum_{p=1}^{2m} {[A]}_{p+1,p} & \text{if $i=j$}, \\
\max \{ [X(2m-1)]_{j+1,j} + [A]_{j+1,j} , [X(2m-1)]_{j+2,j} + [A]_{j+2,j} \} & \text{if $i \neq j$}.
\end{cases}
\nonumber
\end{eqnarray}
\end{proof}

\begin{proof}[Proof of $(5)$ of Theorem $\ref{Main}$] Suppose that $(\ref{TopLaw})$ holds. Then, the matrix $X(2m)$ satisfies 
\begin{equation}
[X(2m)]_{i,j} \leq [A]_{1,2m+1}+ \displaystyle \sum_{p=1}^{2m} {[A]}_{p+1,p} =L= [X(2m)]_{i,i}.
\end{equation}
by Proposition $\ref{NHRR}$. Hence, we have
\begin{eqnarray}\label{TopLaw1}
[A^{\otimes (2m+1)} \otimes A^{\otimes (2m+1)}]_{i,i}
&=&\max_{k} \{ [A^{\otimes (2m+1)}]_{i,k} + [A^{\otimes (2m+1)}]_{k,i} \} \nonumber \\
&=&\max_{k \neq i} \{ 2 * [A^{\otimes (2m+1)}]_{i,i} , [A^{\otimes (2m+1)}]_{i,k} + [A^{\otimes (2m+1)}]_{k,i} \} \nonumber \\
&=&2 * L
\end{eqnarray}
and we have
\begin{eqnarray}\label{TopLaw2}
[A^{\otimes (2m+1)} \otimes A^{\otimes (2m+1)}]_{i,j}
&=&\max_{k} \{ [A^{\otimes (2m+1)}]_{i,k} + [A^{\otimes (2m+1)}]_{k,j} \} \nonumber \\
&\leq& \max_{k} \{ [A]_{i,i} + [A]_{j,j} \} \nonumber \\
&=&2 * L
\end{eqnarray}
for $i \neq j$. Thus, the matrix $[X(k)]_{i,i}$ can be calculated inductively as follows by $(\ref{TopLaw1})$ and $(\ref{TopLaw2})$.
\begin{eqnarray}
[X(k)]_{i,i}
&=&[X(\alpha * (2m+1) -1)]_{i,i} \nonumber \\
&=&[A^{\otimes \alpha * (2m+1)}]_{i,i} \nonumber  \\
&=&[\underbrace{A^{\otimes (2m+1)} \otimes \cdots \otimes [A^{\otimes (2m+1)}}_{\alpha}]_{i,i} \nonumber \\
&=&[A^{\otimes (2m+1)} \otimes A^{\otimes (2m+1)} \otimes \underbrace{A^{\otimes (2m+1)} \otimes \cdots \otimes [A^{\otimes (2m+1)}}_{\alpha-2}]_{i,i} \nonumber \\
&=&\alpha * L \nonumber
\end{eqnarray}
\end{proof}

Finally, we introduce application examples Ghost-OSD METHOD. By Ghost-OSD METHOD, for $n \in \mathbb{Z}_{>1}$, we can compute $X(1)$, $X(2)$, $\cdots$ , $X(n-1)$. Hence, we believe that there is a possibility of shortening the problem solving time when solving problems involving $X(k)=A^{\otimes (k + 1)}$. Thus, for example, it can be applied in
\begin{equation}\label{GOM2}
D^{\otimes n}
=B^{\otimes n} \oplus \Biggr ( \bigoplus_{k=1}^{n-1} ( B^{\otimes n-k} \otimes \color{red} A^{\otimes k} \color{black} ) \Biggr ) \oplus \color{red} A^{\otimes n}  \color{black},
\end{equation}
where $B$ be the diagonal matrix which satisfies $[B]_{i,i}=[B]_{j,j}$ for $i \neq j$ and $D:=B \oplus A$. Note that the matrix $B$ satisfies $[B]_{i,j}=\varepsilon$ for $i \neq j$.

In this case, we can expect a significant reduction in calculation time because \color{blue} not only \color{black} $B^{\otimes n}$ and $B^{\otimes n-k}$ can be treated as a calculation of a diagonal matrix on linear algebra \color{blue} but also \color{black} most component calculations can be completed by substituting value of multiple of $[B]_{1,1}$.

Let's prove that decomposition $(\ref{GOM2})$  holds. Here is one lemma that is necessary for this purpose.

\begin{lem}\label{kakan}\it{
Let $A \in \mathbb{R}^{n \times n}_{\text{max}}$, where $n \in \mathbb{Z}_{>0}$. Then, there holds $A \otimes B = B \otimes A$, where diagonal matrixs $B \in \mathbb{R}^{n \times n}_{\text{max}}$ which satisfies $[B]_{i,i}=[B]_{j,j}$ for $i \neq j$.
}
\end{lem}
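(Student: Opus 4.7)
The plan is to verify the equality entrywise, exploiting the fact that $B$ is a ``scalar'' in the max-plus sense: all diagonal entries of $B$ coincide, and all off-diagonal entries equal $\varepsilon$. Let me denote the common diagonal value by $c := [B]_{1,1} = [B]_{2,2} = \cdots = [B]_{n,n}$.

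First I would compute $[A \otimes B]_{i,j}$ directly from the definition of the max-plus product:
\begin{equation}
[A \otimes B]_{i,j} = \max_{1 \leq k \leq n} \{ [A]_{i,k} + [B]_{k,j} \}. \nonumber
\end{equation}
Since $[B]_{k,j} = \varepsilon$ for every $k \neq j$, each of those terms contributes $-\infty$ inside the maximum, so only $k = j$ survives, giving $[A \otimes B]_{i,j} = [A]_{i,j} + c$. Next I would carry out the symmetric calculation for $[B \otimes A]_{i,j}$: only the $k = i$ term survives because $[B]_{i,k} = \varepsilon$ for $k \neq i$, yielding $[B \otimes A]_{i,j} = c + [A]_{i,j}$. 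Since ordinary addition on $\mathbb{R}$ is commutative (and the convention $c + \varepsilon = \varepsilon + c = \varepsilon$ covers the $[A]_{i,j} = \varepsilon$ case uniformly), the two expressions agree for every $(i,j)$, which gives $A \otimes B = B \otimes A$.

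There is no real obstacle here; the content of the lemma is essentially that a uniform-diagonal matrix acts as a central scalar under $\otimes$, and the proof is a two-line bookkeeping argument. The only thing to be careful about is to handle the $\varepsilon$ entries of $A$ cleanly, which is why I would explicitly note the absorption rule $c + \varepsilon = \varepsilon$ rather than leave it implicit. This lemma will then feed into the later verification of the decomposition $(\ref{GOM2})$ by allowing factors of $B$ and $A$ to be freely reordered inside each summand $B^{\otimes(n-k)} \otimes A^{\otimes k}$.
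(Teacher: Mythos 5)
Your proposal is correct and follows essentially the same route as the paper: expand the max-plus product entrywise, observe that only the diagonal index survives because the off-diagonal entries of $B$ are $\varepsilon$, obtain $[A\otimes B]_{i,j}=[A]_{i,j}+[B]_{j,j}$ and $[B\otimes A]_{i,j}=[B]_{i,i}+[A]_{i,j}$, and conclude using the equality of the diagonal entries of $B$. Your explicit remark about the absorption rule $c+\varepsilon=\varepsilon$ is a harmless extra precaution; nothing more is needed.
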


\begin{proof}
Since the matrix $B$ satisfies $[B]_{i,j}=\varepsilon$ for $i \neq j$, we can calculate $[A \otimes B]_{i,j}$ as follows.
\begin{eqnarray}\label{kakan-1}
[A \otimes B]_{i,j}
&=&\max_{k \neq j} \{ [A]_{i,j} + [B]_{j,j} , [A]_{i,k} + [B]_{k,j} \} \nonumber \\
&=&\max_{k \neq j} \{ [A]_{i,j} + [B]_{j,j} , \varepsilon \} \nonumber \\
&=&[A]_{i,j}+[B]_{j,j}
\end{eqnarray}
In the same way, we can calculate $[B \otimes A]_{i,j}$ as follows.
\begin{eqnarray}\label{kakan-2}
[B \otimes A]_{i,j}
&=&\max_{k \neq i} \{ [B]_{i,i} + [A]_{i,j} , [B]_{i,k} + [A]_{k,j} \} \nonumber \\
&=&\max_{k \neq i} \{ [B]_{i,i} + [A]_{i,j} , \varepsilon \} \nonumber \\
&=&[B]_{i,i} + [A]_{i,j}
\end{eqnarray}
Hence, we have
\begin{eqnarray}
[A \otimes B]_{i,j}
&=&[A]_{i,j}+[B]_{j,j} \nonumber \\
&=&[A]_{i,j}+[B]_{i,i}
=[B \otimes A]_{i,j}. \nonumber
\end{eqnarray}
by $(\ref{kakan-1})$, $(\ref{kakan-2})$ and $[B]_{i,i}=[B]_{j,j}$. Therefore, $A \otimes B = B \otimes A$ holds.
\end{proof}

Now, we prove based on the mathematical induction that decomposition $(\ref{GOM2})$ holds.

\begin{proof}
By mathematical induction, We show that $(\ref{GOM2})$ holds. If $n=1$, it is obvious that $(\ref{GOM2})$ holds. Suppose that $(\ref{GOM2})$ holds when $n>1$. Then, we have
\begin{eqnarray}
&&D^{\otimes (n+1)} \nonumber \\
&=&D \otimes D^{\otimes n} \nonumber \\
&=&D  \otimes \Biggr ( B^{\otimes n} \oplus \Biggr ( \bigoplus_{k=1}^{n-1} \biggr ( B^{\otimes (n-k)} \otimes  A^{\otimes k} \biggr ) \Biggr ) \oplus A^{\otimes n} \Biggr ) \nonumber \\
&=&(B \oplus A) \otimes \Biggr ( B^{\otimes n} \oplus \Biggr ( \bigoplus_{k=1}^{n-1} \biggr ( B^{\otimes (n-k)} \otimes A^{\otimes k}  \biggr ) \Biggr ) \oplus A^{\otimes n} \Biggr ) \nonumber \\
&=&\Biggr ( B^{\otimes (n+1)} \oplus
\color{red}
\Biggr ( \bigoplus_{k=1}^{n-1} \biggr ( B^{\otimes ((n+1)-k)} \otimes A^{\otimes k} \biggl ) \Biggl ) 
\oplus ( B \otimes A^{\otimes n} ) 
\color{black}
\Biggl )
 \oplus 
 A \otimes \biggr ( B^{\otimes n} \oplus \Biggr ( \bigoplus_{k=1}^{n-1} \biggl ( B^{\otimes (n-k)} \otimes A^{\otimes k}  \biggl ) \Biggl ) \oplus A^{\otimes n} \biggl ) \nonumber  \\
&=&B^{\otimes (n+1)} \oplus \Biggr ( 
\color{red}
\bigoplus_{k=1}^{n} \biggr ( B^{\otimes ((n+1)-k)} \otimes A^{\otimes k} \biggr )
\color{black}
\Biggr )
\oplus 
\color{blue}
\Bigg ( A \otimes \Bigg ( B^{\otimes n} \oplus \Biggr ( \bigoplus_{k=1}^{n-1} \biggr ( B^{\otimes (n-k)} \otimes A^{\otimes k}  \biggr ) \Biggr ) \oplus A^{\otimes n} \Biggr ) \Bigg )
\color{black} . \nonumber
\end{eqnarray}
The blue part of above equation  can calculate as follows.
\begin{eqnarray}
&&A \otimes \Bigg ( B^{\otimes n} \oplus \Biggr ( \bigoplus_{k=1}^{n-1} \biggr ( B^{\otimes (n-k)} \otimes A^{\otimes k}  \biggr ) \Biggr ) \oplus A^{\otimes n} \Biggr ) \nonumber \\
&=&
( A \otimes B^{\otimes n} )  \oplus \Biggr ( \bigoplus_{k=1}^{n-1} \color{red} A \otimes  \biggr ( B^{\otimes (n-k)} \otimes A^{\otimes k}  \biggr ) \color{black} \Biggr ) \oplus A^{\otimes (n+1)}
\nonumber \\
&=&
( A \otimes B^{\otimes n} )  \oplus \Biggr ( \bigoplus_{k=1}^{n-1} \color{red}\biggr ( A \otimes  B^{\otimes (n-k)} \biggr ) \otimes A^{\otimes k} \color{black} \Biggr ) \oplus A^{\otimes (n+1)}
\nonumber \\
&=&
( A \otimes B^{\otimes n} )  \oplus \Biggr ( \bigoplus_{k=1}^{n-1} \color{red} \biggr ( B^{\otimes (n-k)}  \otimes  A \biggr ) \color{black} \otimes A^{\otimes k} \Biggr ) \oplus A^{\otimes (n+1)}
\nonumber \\
&=&
( A \otimes B^{\otimes n} )  \oplus \Biggr ( \bigoplus_{k=1}^{n-1} \color{red} B^{\otimes (n-k)} \otimes  \biggr ( A \otimes A^{\otimes k}  \biggr ) \color{black} \Biggr ) \oplus A^{\otimes (n+1)}
\nonumber \\
&=&
( A \otimes B^{\otimes n} )  \oplus \Biggr ( \bigoplus_{k=1}^{n-1} \biggr ( B^{\otimes (n-k)} \otimes A^{\otimes (k + 1 )} \biggr ) \Biggr ) 
\oplus A^{\otimes (n+1)}
\nonumber
\end{eqnarray}
Hence, we have
\begin{eqnarray}
D^{\otimes (n+1)}
=B^{\otimes (n+1)} \oplus \Biggr ( \bigoplus_{k=1}^{n} \biggr ( B^{\otimes ((n+1)-k)} \otimes A^{\otimes k} \biggr ) \Biggr ) \oplus
\color{blue}
( A \otimes B^{\otimes n} )  \oplus \Biggr ( \bigoplus_{k=1}^{n-1} \biggr ( B^{\otimes (n-k)} \otimes A^{\otimes (k + 1 )} \biggr ) \Biggr ) 
\color{black}
\oplus A^{\otimes (n+1)}. \nonumber
\end{eqnarray}

Since $A \otimes B^{\otimes n}=B^{\otimes n} \otimes A$ holds by Lemma $\ref{kakan}$, the blue part of above equation  can be calculated as follows.
\begin{eqnarray}
\color{red} ( A \otimes B^{\otimes n} ) 
\color{black} \oplus 
\Biggr ( \bigoplus_{k=1}^{n-1} ( B^{\otimes (n-k)} \otimes A^{\otimes (k + 1 )} ) \Biggr )
&=&
\color{red} ( B^{\otimes n} \otimes A ) 
\color{black} \oplus 
\color{blue}
\Biggr ( \bigoplus_{k=1}^{n-1} ( B^{\otimes (n-k)} \otimes A^{\otimes (k + 1 )} ) \Biggr ) \nonumber \\
&=&
( B^{\otimes n} \otimes A )  \oplus 
\color{blue}
\Biggr ( \bigoplus_{k=2}^{n} ( B^{\otimes ((n+1)-k)} \otimes A^{\otimes k} ) \Biggr ) \nonumber \\
&=&
\bigoplus_{k=1}^{n} ( B^{\otimes (n+1)-k)} \otimes A^{\otimes k} ) \nonumber
\end{eqnarray}
Thus, we have
\begin{eqnarray}
&&D^{\otimes (n+1)} \nonumber \\
%&=&B^{\otimes (n+1)} \oplus \Biggr ( \bigoplus_{k=1}^{n} ( B^{\otimes ((n+1)-k)} \otimes A^{\otimes k} ) \Biggr ) \oplus \color{red} ( A \otimes B^{\otimes n} )  \oplus \Biggr ( \bigoplus_{k=1}^{n-1} \biggr ( B^{\otimes (n-k)} \otimes A^{\otimes (k + 1 )} \biggr ) \Biggr ) \color{black} \oplus A^{\otimes (n+1)} \nonumber \\
&=&B^{\otimes (n+1)} \oplus \Biggr ( \bigoplus_{k=1}^{n} ( B^{\otimes ((n+1)-k)} \otimes A^{\otimes k} ) \Biggr ) \oplus \color{red} \Biggr ( \bigoplus_{k=1}^{n} ( B^{\otimes ((n+1)-k)} \otimes A^{\otimes k} ) \Biggr ) \color{black} \oplus A^{\otimes (n+1)} \nonumber \\
&=&
B^{\otimes (n+1)} \oplus \Biggr ( \bigoplus_{k=1}^{n} ( B^{\otimes ((n+1)-k)} \otimes A^{\otimes k} ) \Biggr ) \oplus A^{\otimes (n+1)}. \nonumber
\end{eqnarray}
Therefore, $(\ref{GOM2})$  holds for $n \geq 1$.
\end{proof}

\section{Computation Speed Analysis}

In this section we introduce computation Speed Analysis result for Theorem $\ref{Main}$ and Theorem $\ref{main}$. We performed it using Python. The setting of  Speed Analysis is below. Let $A$ be honest matrix of order $2m+1$, where $m \in \mathbb{Z}_{>1}$. Suppose that components of matrix A are random numbers. The range of it is from $1$ to $50$ or $-50$ to $-1$. Let $B$ be the diagonal matrix which satisfies $[B]_{i,i}=m$ and $D:=B \oplus A$. So, at first, we confirm that Theorem $\ref{Main}$ bring good effects to calculations of $X(k)$ when $m$ increases. Next, we confirm that Theorem $\ref{main}$ can not bring good effects to calculations of $Z(k):=D^{\otimes (k+1)}$ when $m$ increases. Below, the horizontal axis of all the graphs introduced in this section show value of $m$ for square matrixs $A$ or $D$ of order $2m+1$. On other word, for value of $m$, it means that $X(1),\cdots,X(P)$ or $Z(1),\cdots,Z(P)$ is computed, where $P$ is greater than at least $2m$ and it is fixed. Note that even if the value on the horizontal axis moves to the right, $P$ remains fixed.

\subsection{The case of Theorem $\ref{Main}$}

The following two figures show the measurement results of the calculation time of $X(1),\cdots,X(2m)$. The vertical axis shows the processing time of calculations. The red line shows processing time $t_1$ for normal calculations
\begin{equation}
[X(k)]_{i,j}=\max_{1 \leq k \leq 2m+1} \{ [X(k-1)]_{i,k} + [X(0)]_{k,j} \} \nonumber
\end{equation}
Note that we does not use Lemma $\ref{HelpCal}$ for normal calculations. The blue line shows processing time $t_2$ using Theorem $\ref{Main}$. The green line shows the error $t_1-t_2$.
From the two figures below, it can be said that $t_1 - t_2 \sim t_1$ holds in the sense that the green line is along the red line. In addition, we can see that $t_2$ does not increase so much even if the m increases rapidly. What has been said so far does not change even if the upper limit of the value of m is from 20 to 50.

\begin{figure}[ht]
\centering
\begin{minipage}[b]{0.49\columnwidth}
    \centering
    \includegraphics[width=1.1\columnwidth]{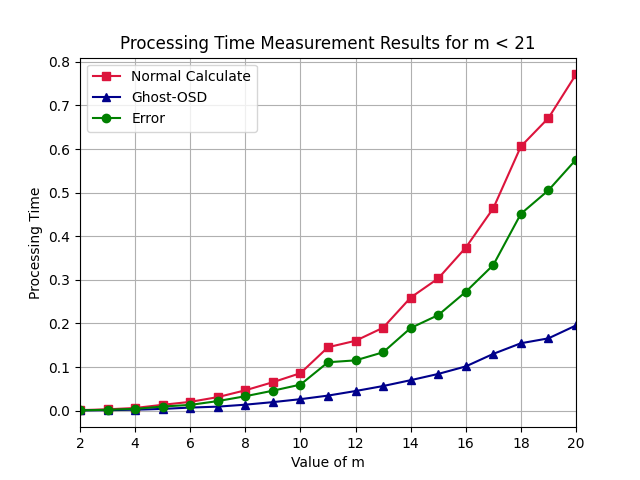}
\end{minipage}
\begin{minipage}[b]{0.49\columnwidth}
    \centering
    \includegraphics[width=1.1\columnwidth]{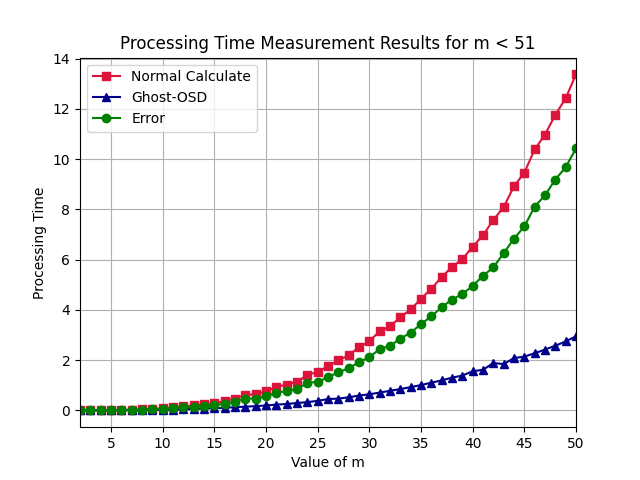}
\end{minipage}
\end{figure}

The above result can be said to be a case where $\beta=1$ for calculations time of $X(0),\cdots,X(\beta(2m+1)-1)$. If $\beta > 1$, for example, if $\beta=10$, then we can obtain the following measurement. It can be confirmed that the same phenomenon as in the case of $\beta=1$ is occurring. From the above numerical analysis results, it can be concluded that Theorem $\ref{Main}$ is highly useful.

\begin{figure}[h]
\centering
\begin{minipage}[b]{0.49\columnwidth}
    \centering
    \includegraphics[width=1.1\columnwidth]{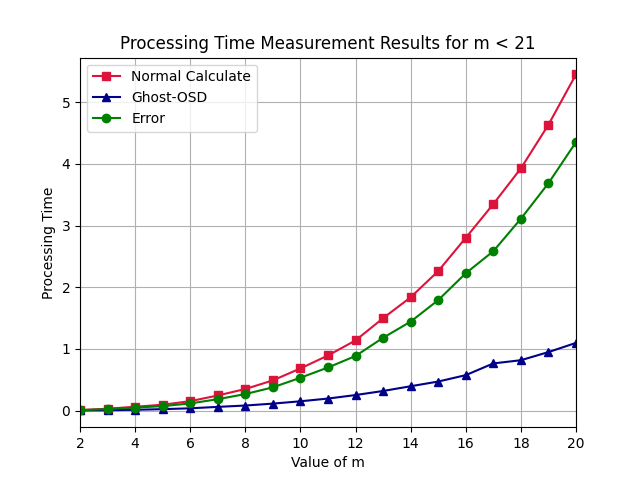}
\end{minipage}
\begin{minipage}[b]{0.49\columnwidth}
    \centering
    \includegraphics[width=1.1\columnwidth]{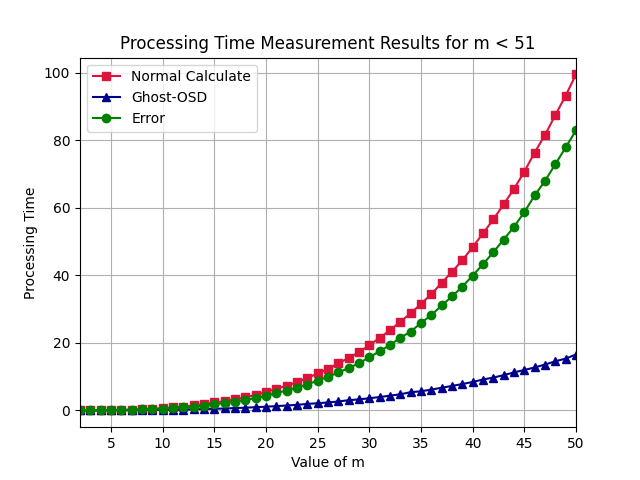}
\end{minipage}
\end{figure}

\subsection{The case of Theorem $\ref{main}$}

The following four figures show the measurement results of the calculation time of $Z(1),\cdots,Z(2m)$. The vertical axis shows the processing time of calculations of $Z(1),\cdots,Z(2m)$ . The red line shows the calculation time $t_1$ for normal calculations. The blue line shows the calculation time $t_2$ using Theorem $\ref{main}$. For $m \in [2,11]$, diverse movements appeared with respect to the crossing and entanglement of the red and blue lines. So, it can be said that  $t_1 \sim t_2$ holds in the sense that the blue line is along the red line very roughly. In this way, it is not possible to judge whether the effect of Theorem 2 is good or bad.

\begin{figure}[h]
\centering
\begin{minipage}[b]{0.49\columnwidth}
    \centering
    \includegraphics[width=1.1\columnwidth]{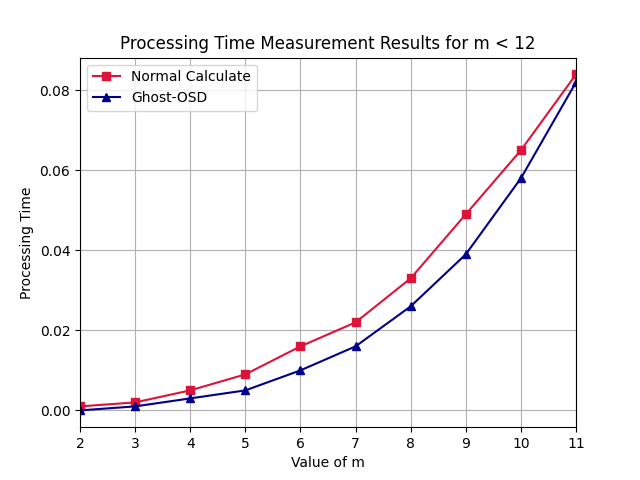}
\end{minipage}
\begin{minipage}[b]{0.49\columnwidth}
    \centering
    \includegraphics[width=1.1\columnwidth]{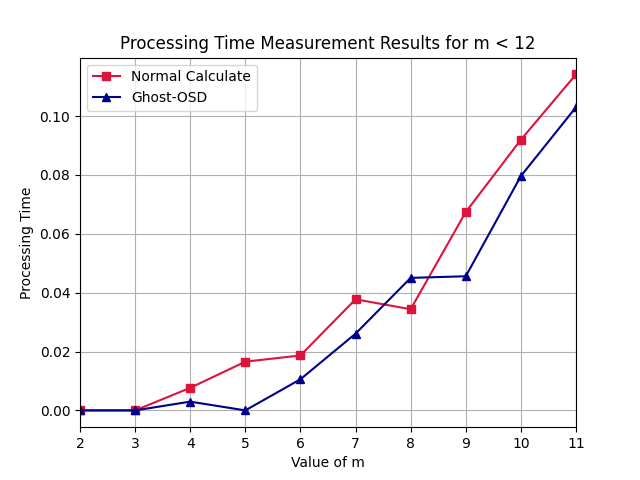}
\end{minipage}
\begin{minipage}[b]{0.49\columnwidth}
    \centering
    \includegraphics[width=1.1\columnwidth]{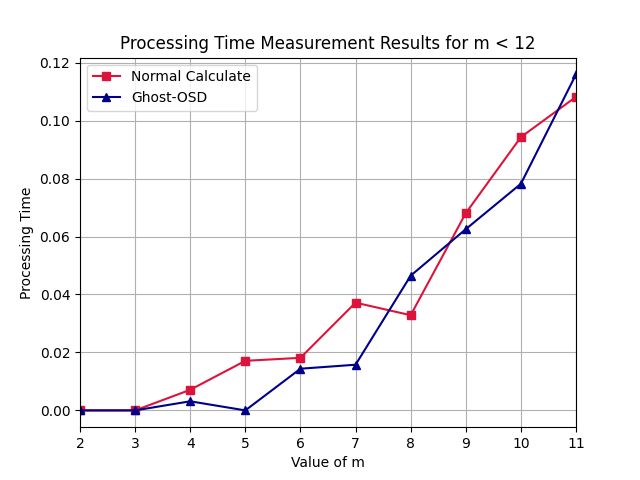}
\end{minipage}
\begin{minipage}[b]{0.49\columnwidth}
    \centering
    \includegraphics[width=1.1\columnwidth]{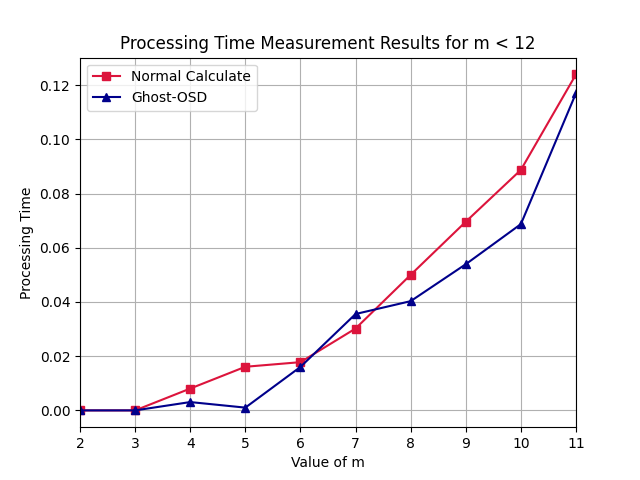}
\end{minipage}
\end{figure}

Let's raise the upper limit of $m$ from $11$ to $20$ or $50$. Then, we have the following result.

\begin{figure}[h]
\centering
\begin{minipage}[b]{0.455\columnwidth}
    \centering
    \includegraphics[width=1.1\columnwidth]{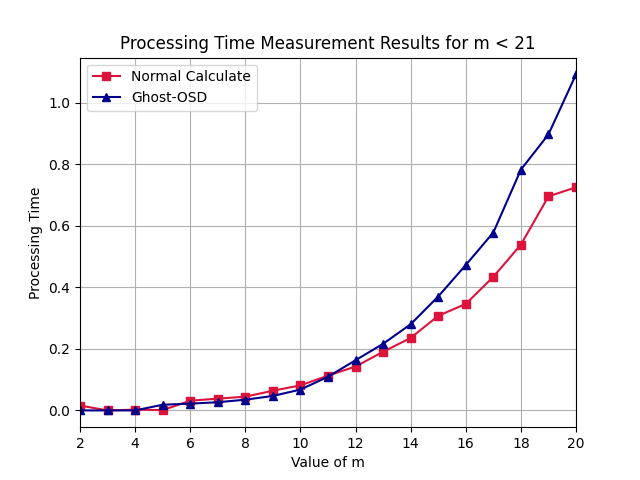}
\end{minipage}
\begin{minipage}[b]{0.455\columnwidth}
    \centering
    \includegraphics[width=1.1\columnwidth]{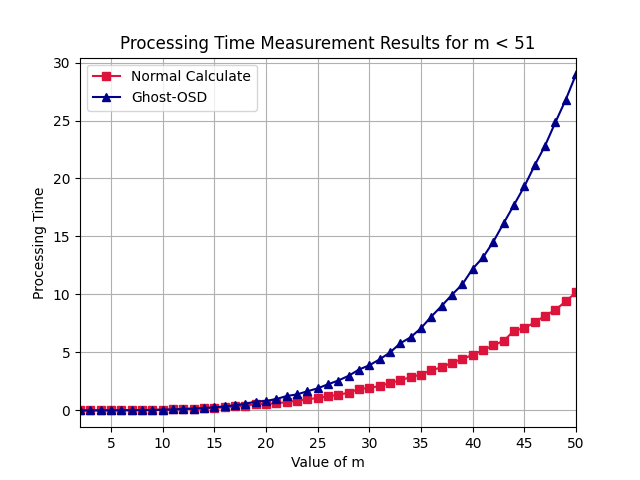}
\end{minipage}
\end{figure}

Since  the blue line is above the red line when $m$ is large enough,
it can be said that  $t_1 < t_2$ holds. In addition, we can see that the blue line is moving farther away from the red line. if the $m$ increases rapidly. From the above numerical analysis results, it can be concluded that Theorem $\ref{main}$ is not highly useful.

In fact, it can be seen from the following two figures that the calculation time is shorter than the normal calculation when the following calculation method is adopted.
\begin{equation}
[Z(k)]_{i,j}
=
\max \{ [Z(k)]_{i,i} + [Z(0)]_{i,i} , [Z(k)]_{i,i+1} + [Z(0)]_{i+1,i} , [Z(k)]_{i,i+2} + [Z(0)]_{i+2,i} \}
\nonumber
\end{equation}
The green line of  the following two figures shows the processing time $t_3$ using the above calculation. 
\begin{figure}[h]
\centering
\begin{minipage}[b]{0.49\columnwidth}
    \centering
    \includegraphics[width=1.1\columnwidth]{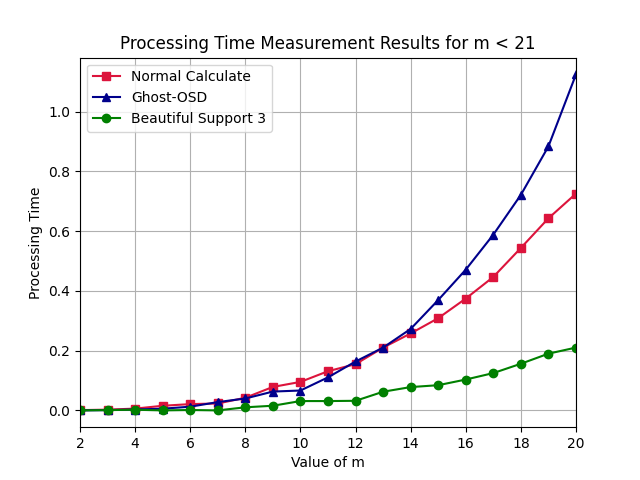}
\end{minipage}
\begin{minipage}[b]{0.49\columnwidth}
    \centering
    \includegraphics[width=1.1\columnwidth]{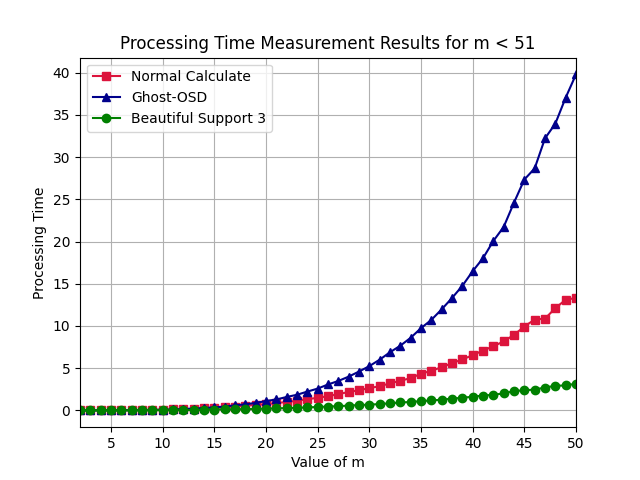}
\end{minipage}
\end{figure}

From above two results, it can be said that $(\ref{error3})$ holds in the sense that the green line is below the red line.
\begin{equation}\label{error3}
t_3 < t_1 < t_2
\end{equation}
In particular, $t_3$ can be seen that the increase does not seem to increase much when $m$ increases. Above all, with the introduction of the green line, we realized that the compression of the number of calculations $\varepsilon + a$ for $a \in \mathbb{R}$ is important.\\

\subsection{Conclusion.}

Finally, we would like to give a conclusion. From the numerical analysis results of the two cases, if the components of the matrix contain $\varepsilon$, then it is assumed that the more the position information of the matrix is known, the shorter the calculation time. In addition, it can be considered that it is more important to reduce the number of calculations than to use information that is already known. In the sense of realizing the importance of this, we do not think that numerical results for Theorem $\ref{main}$ are wasted. Based on the above considerations, it can be concluded that reducing the number of calculations, that is, how to reduce the wasteful act of adding $\varepsilon$ and , is important in numerical linear algebra.
\begin{equation}[\text{Wasteful Act}] \quad
\varepsilon + * = \varepsilon
\end{equation}
From a theoretical point of view, direct substitution calculations using graph theory may also be important. However, in this case, only the diagonal component was used, and since it was conditional, its importance could not be demonstrated. On the other hand, We hope that this paper will recognize the value and interest of demonstrating its importance.

\newpage

\section{Bounus : Jet Black Theorem , Elementary cellular automaton and Oshida Rule}

At first, we introduce the definition of perfect for the matrix of max-plus algebra.

\begin{defi}[Perfect]
\it{
Let $n$ be the non-negative integer. For} $A \in \mathbb{R}^{n \times n}_{\text{max}}$, \it{if there exists
\begin{equation}
m:=\min \{ p \in \mathbb{Z}_{>1} \mid \text{$[A^{\otimes p}]_{i,j} \neq \varepsilon$ for all $(i,j) \in [1,n]^2 $}  \}, \nonumber
\end{equation}
then we say that $A$ is $m$-perfect.
}
\end{defi}

For non-negative integer $m$ with $m>1$, honest matrix $A \in \mathbb{R}^{(2m+1) \times (2m+1)}_{\text{max}}$ is $2m$-perfect by the following result.

\begin{theo}\label{PerfectHonest}
\it{
We have
\begin{equation}\label{Per_Per}
\min \{ p \in \mathbb{Z}_{>1} \mid \text{$[A^{\otimes p}]_{i,j} \neq \varepsilon$ for all $(i,j) \in [1,2m+1]^2 $}  \}=2m.
\end{equation}
}
\end{theo}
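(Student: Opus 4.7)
The plan is to reduce Theorem \ref{PerfectHonest} to a purely combinatorial question about directed walks in $\mathcal{G}(A)$. Since every non-$\varepsilon$ entry of $A$ is a finite real number, $[A^{\otimes p}]_{i,j}\neq\varepsilon$ holds iff at least one directed walk of length exactly $p$ from $j$ to $i$ exists in $\mathcal{G}(A)$. So the theorem really asks for the smallest $p>1$ such that every ordered pair of vertices is joined by a walk of length exactly $p$.

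First I would exploit the explicit edge structure: the out-neighbours of vertex $k$ are $k+1$ and $k+2$ (indices taken modulo $2m+1$), so any walk of length $p$ starting at $j$ that uses $a$ unit-steps and $b=p-a$ double-steps lands at $j+a+2b\equiv j+p+b\pmod{2m+1}$. As $b$ varies over $\{0,1,\dots,p\}$, the set of endpoints is a block of $p+1$ consecutive residues modulo $2m+1$, whose cardinality is therefore $\min(p+1,\,2m+1)$.

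Next I would verify both directions of \eqref{Per_Per} by inspecting this block. For the upper bound, at $p=2m$ the block contains $p+1=2m+1$ distinct residues and hence covers every vertex, so $[A^{\otimes 2m}]_{i,j}\neq\varepsilon$ uniformly in $(i,j)$. For the lower bound, at any $p\in\{2,\dots,2m-1\}$ the block has at most $2m$ elements and therefore misses at least one vertex, which produces a pair $(i,j)$ with $[A^{\otimes p}]_{i,j}=\varepsilon$. Assembling the two halves yields $\min=2m$, as required.

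I do not expect a serious obstacle; the only real care is in the cyclic indexing, specifically the claim that the residues $j+p+b$ for $b\in\{0,\dots,p\}$ are genuinely $p+1$ pairwise distinct values when $p+1\leq 2m+1$ (they are, since they form a run of consecutive integers shorter than the modulus $2m+1$). A reassuring consistency check is that for $p=k+1\leq m$ my count $p+1=k+2$ exactly matches Corollary \ref{CPNocoro}, confirming that the set-up is compatible with the combinatorics already established earlier in the paper.
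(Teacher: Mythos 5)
Your proposal is correct, and it is a genuinely different (and in one respect tighter) argument than the one in the paper. The paper proves the upper bound by writing $A^{\otimes 2m}=A^{\otimes m}\otimes A^{\otimes m}$ and verifying $[A^{\otimes 2m}]_{1,j}\neq\varepsilon$ case by case ($j=1$, $j=2$, $j\in[3,m+2]$, $j\geq m+3$) from the known support pattern of the rows of $A^{\otimes m}$ (Corollary \ref{CPNocoro}, the shifted row vectors $f_i(e)$, and a block submatrix $C$), and it proves the lower bound by exhibiting the single vanishing entry $[A^{\otimes(2m-1)}]_{1,4}=\varepsilon$; strictly speaking that last step rules out only $p=2m-1$ and needs the easy monotonicity remark (if $A^{\otimes p}$ has no $\varepsilon$ entry then neither does $A^{\otimes(p+1)}$, since every column of $A$ contains a finite entry) to exclude $p\in\{2,\dots,2m-2\}$. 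You instead observe that each step of a walk in $\mathcal{G}(A)$ advances the vertex by $+1$ or $+2$ modulo $2m+1$, so the endpoints of length-$p$ walks from $j$ are exactly the residues $j+p+b$ with $b\in\{0,\dots,p\}$, an interval of $\min(p+1,2m+1)$ consecutive residues; this yields both bounds at once, covers every $p\in\{2,\dots,2m-1\}$ directly without any monotonicity argument, and your consistency check against Corollary \ref{CPNocoro} (the count $k+2$ for $p=k+1\leq m$) confirms the set-up. What each approach buys: yours is shorter, uniform in $p$, and generalizes immediately to other band widths $r$; the paper's stays entirely inside the support-location machinery it has already built for Theorem \ref{Main}, at the cost of a longer case analysis and the small unstated monotonicity step.
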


\begin{proof}At first, we prove that the following inequality holds.
\begin{equation}\label{Per_1}
\min \{ p \in \mathbb{Z}_{>1} \mid \text{$[A^{\otimes p}]_{i,j} \neq \varepsilon$ for all $(i,j) \in [1,2m+1]^2 $}  \} \leq 2m.
\end{equation}
On the other word, we prove that we have $[A^{\otimes 2m}]_{i,j} \neq \varepsilon$ for all $(i,j) \in [1,2m+1]^2$.\par

Let $e$ be the first row vector of $A^{\otimes m}$. First, $[A^{\otimes 2m}]_{1,1}$ satisfies
\begin{eqnarray}\label{p_1,1}
[A^{\otimes 2m}]_{1,1}
&=& [A^{\otimes m} \otimes A^{\otimes m}]_{1,1} \nonumber \\
&=&\max_k  [A^{\otimes m}]_{1,k} + [A^{\otimes m}]_{k,1} \nonumber \\
&=&\max \{ [A^{\otimes m}]_{1,m+1} + [A^{\otimes m}]_{m+1,1} , \max_{k \neq m+1} [A^{\otimes m}]_{1,k} + [A^{\otimes m}]_{k,1} \} \nonumber \\
&\geq&[A^{\otimes m}]_{1,m+1} + [A^{\otimes m}]_{m+1,1}.
\end{eqnarray}
Since
\begin{eqnarray}\label{p_im}
f_{m-1}(e)
&=&(\underbrace{\varepsilon,\cdots,\varepsilon}_{m},[f_{m-1}(e)]_{m+1},[f_{m-1}(e)]_{m+2},[f_{m-1}(e)]_{m+3},\cdots,[f_{m-1}(e)]_{2m+1}) \nonumber \\
&=&(\underbrace{\varepsilon,\cdots,\varepsilon}_{m},[A^{\otimes m}]_{m,m+1},[A^{\otimes m}]_{m,m+2},[A^{\otimes m}]_{m,m+3},\cdots,[A^{\otimes m}]_{m,2m+1}) \nonumber \\
&\neq&(\varepsilon,\cdots,\varepsilon),
\end{eqnarray}
we have $[f_m(e)]_1=[A^{\otimes m}]_{m+1,1} \neq \varepsilon$. Hence, we have
\begin{equation}\label{p_1,2}
[A^{\otimes m}]_{1,m+1} + [A^{\otimes m}]_{m+1,1} \neq \varepsilon.
\end{equation}
Thus, we have $[A^{\otimes 2m}]_{1,1} \neq \varepsilon$ by $(\ref{p_1,1})$ and $(\ref{p_1,2})$

Next, $[A^{\otimes 2m}]_{1,2}$ satisfies
\begin{eqnarray}\label{p_2,1}
[A^{\otimes 2m}]_{1,2}
&=&[A^{\otimes m} \otimes A^{\otimes m}]_{1,2} \nonumber \\
&=&\max_k [A^{\otimes m}]_{1,k} + [A^{\otimes m}]_{k,2} \nonumber \\
&=&\max \{ [A^{\otimes m}]_{1,m+2} + [A^{\otimes m}]_{m+2,2} , \max_{k \neq m+2}  [A^{\otimes m}]_{1,k} + [A^{\otimes m}]_{k,2} \} \nonumber \\
&\geq&[A^{\otimes m}]_{1,m+2} + [A^{\otimes m}]_{m+2,2}.
\end{eqnarray}
By $(\ref{p_im})$, we have $[f_{m+1}(e)]_2=[A^{\otimes m}]_{m+2,2} \neq \varepsilon$. By Corollay $\ref{CPNocoro}$, we have 
\begin{eqnarray}
[A^{\otimes m}]_{1, \color{blue} 2(m+1)-(m-1)-1}
&=&[A^{\otimes m}]_{1, \color{blue} 2m+2-m+1-1} \nonumber \\
&=&[A^{\otimes m}]_{1, \color{blue} m+2} \nonumber \\
&\neq& \varepsilon. \nonumber 
\end{eqnarray}
Hence,  we have 
\begin{equation}
[A^{\otimes m}]_{1,m+2} + [A^{\otimes m}]_{m+2,2} \neq \varepsilon. \nonumber
\end{equation}
Thus, we have $[A^{\otimes 2m}]_{1,2} \neq \varepsilon$ by $(\ref{p_2,1})$.

Next, $[A^{\otimes 2m}]_{1,j}$ satisfies the following inequality for $j \in [3,m+2]$.
\begin{eqnarray}\label{p_n,1}
[A^{\otimes 2m}]_{1,j}
&=&[A^{\otimes m} \otimes A^{\otimes m}]_{1,j} \nonumber \\
&=&\max_{k}  [A^{\otimes m}]_{1,k} + [A^{\otimes m}]_{k,j} \nonumber \\
&=&\{ \varepsilon + [A^{\otimes m}]_{2,j} , \max_{2 \leq k \leq 2m+1}  [A^{\otimes m}]_{1,k} + [A^{\otimes m}]_{k,j} \} \nonumber \\
&=&\max_{2 \leq k \leq 2m+1} [A^{\otimes m}]_{1,k} + [A^{\otimes m}]_{k,j} \nonumber \\
&=&\max \{ [A^{\otimes m}]_{1,2} + [A^{\otimes m}]_{2,j} , \max_{2 \leq k \leq 2m+1} [A^{\otimes m}]_{1,k} + [A^{\otimes m}]_{k,j} \} \nonumber \\
&\geq&[A^{\otimes m}]_{1,2} + [A^{\otimes m}]_{2,j} \nonumber \\
&=&[A^{\otimes m}]_{1,2} + [f_1(e)]_{j}.
\end{eqnarray}
Since $f_1(e)$ satisfies
\begin{equation}
f_1(e)
=(\varepsilon,\varepsilon,[f_1(e)]_3,\cdots,f_1(e)_{m+3},\varepsilon,\cdots,\varepsilon)
\neq (\varepsilon,\cdots,\varepsilon), \nonumber
\end{equation}
we have $[f_1(e)]_{j} \neq \varepsilon$ for $j \in [3,m+2]$. Hence, we have 
\begin{equation}
[A^{\otimes m}]_{1,2} + [f_1(e)]_{j} \neq \varepsilon \nonumber
\end{equation}
by $[A^{\otimes m}]_{1,2} \neq \varepsilon$. Thus, we have $[A^{\otimes 2m}]_{1,j} \neq \varepsilon$ for $j \in [3,m+2]$ by $(\ref{p_n,1})$.\par

Final, $[A^{\otimes 2m}]_{1,j}$ satisfies the following inequality
for $r:=m+1$ and $j \geq m+3$.
\begin{eqnarray}\label{p_3,1}
[A^{\otimes 2m}]_{1,j}
&=&[A^{\otimes m} \otimes A^{\otimes m}]_{1,j} \nonumber \\
&=&\max_{k}  [A^{\otimes m}]_{1,k} + [A^{\otimes m}]_{k,j} \nonumber \\
&=&\{ \varepsilon + [A^{\otimes m}]_{2,j} , \max_{2 \leq k \leq 2m+1}  [A^{\otimes m}]_{1,k} + [A^{\otimes m}]_{k,j} \} \nonumber \\
&=&\max_{2 \leq k \leq 2m+1} [A^{\otimes m}]_{1,k} + [A^{\otimes m}]_{k,j} \nonumber \\
&\geq&\max_{2 \leq k \leq m} [A^{\otimes m}]_{1,k} + [A^{\otimes m}]_{k,j} \nonumber \\
&=&\max \{ [A^{\otimes m}]_{1,j-(m+1)} + [A^{\otimes m}]_{j-(m+1),j} , \max_{\substack{2 \leq k \leq m, \\ k \neq j-(m+1)}} [A^{\otimes m}]_{1,k} + [A^{\otimes m}]_{k,j} \} \nonumber \\
&\geq&[A^{\otimes m}]_{1,j-r} + [A^{\otimes m}]_{j-r,j}
\end{eqnarray}
Since it follows from $m+3 \leq j \leq 2m+1$ that $j-r$ satisfies $2 \leq j-r \leq m$, we have 
\begin{equation}\label{p_3,2}
[A^{\otimes m}]_{1,j-r}=[e]_{j-r} \neq \varepsilon
\end{equation}
for $j-r \in [2,m] \subset [2,m+2]$ by Corollary $\ref{CPNocoro}$. Moreover, since  block matrixs
\begin{equation}
C=
\begin{pmatrix}
[A^{\otimes m}]_{1,m+3} & [A^{\otimes m}]_{1,m+4} & [A^{\otimes m}]_{1,m+5} & \cdots & [A^{\otimes m}]_{1,2m} & [A^{\otimes m}]_{1,2m+1} \\
[A^{\otimes m}]_{2,m+3} & [A^{\otimes m}]_{2,m+4} & [A^{\otimes m}]_{2,m+5} & \cdots & [A^{\otimes m}]_{2,2m} & [A^{\otimes m}]_{2,2m+1} \\
[A^{\otimes m}]_{3,m+3} & [A^{\otimes m}]_{3,m+4} & [A^{\otimes m}]_{3,m+5} & \cdots & [A^{\otimes m}]_{3,2m} & [A^{\otimes m}]_{3,2m+1} \\
[A^{\otimes m}]_{4,m+3} & [A^{\otimes m}]_{4,m+4} & [A^{\otimes m}]_{4,m+5} & \cdots & [A^{\otimes m}]_{4,2m} & [A^{\otimes m}]_{4,2m+1} \\
\vdots & \vdots & \vdots & \ddots & \vdots & \vdots \\
[A^{\otimes m}]_{m-1,m+3} & [A^{\otimes m}]_{m-1,m+4} & [A^{\otimes m}]_{m-1,m+5} & \cdots & [A^{\otimes m}]_{m-1,2m} & [A^{\otimes m}]_{m-1,2m+1} \\  
[A^{\otimes m}]_{m,m+3} & [A^{\otimes m}]_{m,m+4} & [A^{\otimes m}]_{m,m+5} & \cdots & [A^{\otimes m}]_{m,2m} & [A^{\otimes m}]_{m,2m+1} \\  
\end{pmatrix}
\nonumber
\end{equation}
of $A^{\otimes m}$ satisfies 
\begin{eqnarray}
&&C \nonumber \\
&=&
\begin{pmatrix}
\varepsilon & \varepsilon & \varepsilon & \cdots & \varepsilon & \varepsilon \\
[A^{\otimes m}]_{\color{blue}2,m+3} & \varepsilon & \varepsilon & \cdots & \varepsilon & \varepsilon \\
[A^{\otimes m}]_{3,m+4} & [A^{\otimes m}]_{\color{blue}3,m+4} & \varepsilon & \cdots & \varepsilon & \varepsilon \\
[A^{\otimes m}]_{4,m+5} & [A^{\otimes m}]_{4,m+5} & [A^{\otimes m}]_{\color{blue}4,m+5} & \cdots & \varepsilon & \varepsilon \\
\vdots & \vdots & \vdots & \ddots & \vdots & \vdots \\
[A^{\otimes m}]_{m-1,m+3} & [A^{\otimes m}]_{m-1,m+4} & [A^{\otimes m}]_{m-1,m+5} & \cdots & [A^{\otimes m}]_{\color{blue}m-1,2m} & \varepsilon \\  
[A^{\otimes m}]_{m,m+3} & [A^{\otimes m}]_{m,m+4} & [A^{\otimes m}]_{m,m+5} & \cdots & [A^{\otimes m}]_{m,2m} & [A^{\otimes m}]_{\color{blue}m,2m+1} \\
\end{pmatrix}
\nonumber \\
&=&
\begin{pmatrix}
\varepsilon & \varepsilon & \varepsilon & \cdots & \varepsilon & \varepsilon \\
\color{blue}[A^{\otimes m}]_{(m+3)-r,m+3} & \varepsilon & \varepsilon & \cdots & \varepsilon & \varepsilon \\
[A^{\otimes m}]_{3,m+3} & \color{blue}[A^{\otimes m}]_{(m+4)-r,m+4} & \varepsilon & \cdots & \varepsilon & \varepsilon \\
[A^{\otimes m}]_{4,m+3} & [A^{\otimes m}]_{4,m+4} & \color{blue}[A^{\otimes m}]_{(m+5)-r,m+5} & \cdots & \varepsilon & \varepsilon \\
\vdots & \vdots & \vdots & \ddots & \vdots & \vdots \\
[A^{\otimes m}]_{m-1,m+3} & [A^{\otimes m}]_{m-1,m+4} & [A^{\otimes m}]_{m-1,m+5} & \cdots & \color{blue}[A^{\otimes m}]_{2m-r,2m} & \varepsilon \\  
[A^{\otimes m}]_{m,m+3} & [A^{\otimes m}]_{m,m+4} & [A^{\otimes m}]_{m,m+5} & \cdots & [A^{\otimes m}]_{m,2m} & \color{blue}[A^{\otimes m}]_{(2m+1)-r,2m+1} \\
\end{pmatrix}
\nonumber \\
&\neq&
\begin{pmatrix}
\varepsilon & \varepsilon & \varepsilon & \cdots & \varepsilon & \varepsilon \\
\color{blue}\varepsilon & \varepsilon & \varepsilon & \cdots & \varepsilon & \varepsilon \\
\varepsilon & \color{blue}\varepsilon & \varepsilon & \cdots & \varepsilon & \varepsilon \\
\varepsilon & \varepsilon & \color{blue}\varepsilon & \cdots & \varepsilon & \varepsilon \\
\vdots & \vdots & \vdots & \ddots & \vdots & \vdots \\
\varepsilon & \varepsilon & \varepsilon & \cdots & \color{blue}\varepsilon & \varepsilon \\  
\varepsilon & \varepsilon & \varepsilon & \cdots & \varepsilon & \color{blue}\varepsilon \\
\end{pmatrix}
\nonumber
\end{eqnarray}
by the vector
\begin{equation}
[f_0(e)]=(\varepsilon,[A^{\otimes m}]_{1,2},\cdots,[A^{\otimes m}]_{2,m+1}, \color{blue} [A^{\otimes m}]_{2,m+2} \color{black} ,\color{red}\varepsilon,\cdots,\varepsilon \color{black}) \nonumber
\end{equation}
and
\begin{equation}
[f_1(e)]=(\varepsilon,\varepsilon,[A^{\otimes m}]_{2,3},\cdots, \color{blue} [A^{\otimes m}]_{2,m+2} \color{black} ,\color{red}[A^{\otimes m}]_{2,m+3},\varepsilon,\cdots,\varepsilon \color{black}), \nonumber
\end{equation}
we have 
\begin{equation}\label{p_3,3}
[A^{\otimes m}]_{j-r,j} \neq \varepsilon
\end{equation}
for $(j-r,j) \in [2,m] \times [m+3,2m+1]$.\par

Hence, we obtain the following result by $(\ref{p_3,2})$ and $(\ref{p_3,3})$.
\begin{equation}\label{p_3,4}
[A^{\otimes m}]_{1,j-(m+1)} + [A^{\otimes m}]_{j-(m+1),j} \neq \varepsilon.
\end{equation}

Thus, we have $[A^{\otimes 2m}]_{1,j}$ for $j \geq m+3$ by $(\ref{p_3,1})$ and $(\ref{p_3,4})$. From the above discussion, we have $[A^{\otimes 2m}]_{i,j} \neq \varepsilon$ for all $(i,j) \in [1,2m+1]^2$. Therefore, the inequality $(\ref{Per_1})$ holds.\\

Second, we prove that the following inequality holds.
\begin{equation}\label{Per_2}
2m-1 < \min \{ p \in \mathbb{Z}_{>1} \mid \text{$[A^{\otimes p}]_{i,j} \neq \varepsilon$ for all $(i,j) \in [1,2m+1]^2 $}  \}.
\end{equation}
To prove that $(\ref{Per_2})$ holds, it is sufficient to show that we have 
\begin{equation}\label{Per_22}
[A^{\otimes (2m-1)}]_{1,4}= \varepsilon.
\end{equation}
Let $e$ be the first row vector of $A^{\otimes (m-1)}$. The vector $e$ is constructed as follows.
\begin{equation}
[e]_j
\begin{cases}
\neq \varepsilon & \text{for $j \in [4,m+3]$}, \\
=\varepsilon & \text{for $j \in [1,3] \cup [m+4,2m+1]$}.
\end{cases}
\nonumber 
\end{equation}
Since 
\begin{eqnarray}
\# [m+4,2m+1] 
&=&\# \{ m+4 , m+5 , \cdots , 2m+1 \} \nonumber \\
&=&(2m+1)-(m+4)+1 \nonumber \\
&=&m-2, \nonumber 
\end{eqnarray}
we have 
\begin{equation}\label{Zureteru1}
[f_1(e)]_{4}=\varepsilon
\end{equation}
and
\begin{equation}\label{Zureteru2}
[f_{m-2}(e)]_j = \varepsilon
\end{equation}
for $j \in [1,m+1]$. By $(\ref{Zureteru1})$ and $(\ref{Zureteru2})$, we have $[f_i(e)]_4=\varepsilon$ for $i=1,2,\cdots,m+1=(m-2)+3$. Hence, $A^{\otimes (m-1)}$ satisfies
\begin{equation}\label{Zureteru3}
[A^{\otimes (m-1)}]_{i+1,4}=[f_i(e)]_4=\varepsilon
\end{equation}
for $i=1,2,\cdots,m+1=(m-2)+3$. By $(\ref{Zureteru3})$ and
and
\begin{equation}
[A^{\otimes m}]_{1,j}
\begin{cases}
\neq \varepsilon & \text{for $j \in [2,m+2]$}, \\
=\varepsilon & \text{for $j \in \{ 1 \} \cup [m+3,2m+1]$}.
\end{cases}
, \nonumber 
\end{equation}
we have
\begin{eqnarray}
[A^{\otimes (2m-1)}]_{1,4}
&=&[A^{\otimes m} \otimes A^{\otimes (m-1)}]_{1,4} \nonumber \\
&=&\max_{k} [A^{\otimes m}]_{1,k} + [A^{\otimes (m-1)}]_{k,4} \nonumber \\
&=&\max \{ \color{red} [A^{\otimes m}]_{1,1} \color{black} + \color{red} [A^{\otimes (m-1)}]_{1,4} \color{black} , \max_{k \neq 1} [A^{\otimes m}]_{1,k} + [A^{\otimes (m-1)}]_{k,4} \} \nonumber \\
&=&\max \{ \color{red} \varepsilon \color{black} , \max_{k \neq 1} [A^{\otimes m}]_{1,k} + [A^{\otimes (m-1)}]_{k,4} \} \nonumber \\
&=&\max_{k \neq 1} [A^{\otimes m}]_{1,k} + [A^{\otimes (m-1)}]_{k,4} \nonumber \\
&=&\max \{ \max_{k \in [2,m+2]} [A^{\otimes m}]_{1,k} + \color{red} [A^{\otimes (m-1)}]_{k,4} \color{black} , \max_{k \in [m+3,2m+1]} [A^{\otimes m}]_{1,k} + [A^{\otimes (m-1)}]_{k,4} \}  \nonumber \\
&=&\max \{ \color{red} \varepsilon \color{black} , \max_{k \in [m+3,2m+1]} [A^{\otimes m}]_{1,k} + [A^{\otimes (m-1)}]_{k,4} \}  \nonumber \\
&=&\max_{k \in [m+3,2m+1]} \color{red} [A^{\otimes m}]_{1,k} \color{black} + [A^{\otimes (m-1)}]_{k,4}.
\end{eqnarray}
Thus, $(\ref{Per_22})$ holds. Therefore, it follows from $(\ref{Per_1})$ and $(\ref{Per_2})$ that we have $(\ref{Per_Per})$
\end{proof}

As application of Theorem $\ref{PerfectHonest}$, we introduce ''Jet Black Theorem''. Let
\begin{eqnarray}
E(A)=\{ f_i(e) \mid \text{The vector $e$ be the first row vector of $A$ and $i \in [0,2m]$.} \} \nonumber
\end{eqnarray}
for honest matrix $A \in \mathbb{R}^{2m+1 \times 2m+1}_{\text{max}}$, where $m$ be non-negative integer with $m>1$. Let 
\begin{eqnarray}
F_{2m+1}=\{ u \in \mathbb{F}_2[X] \mid \text{$x^{2m+i}:=x^i$ for $i=2,\cdots,2m+1$} \}. \nonumber
\end{eqnarray}
Let $h:\mathbb{R}^{2m+1}_{\text{max}} \to F_{2m+1}$ be the function defined by
\begin{eqnarray}
h(u)=\sum_{i=1}^{2m+1} \text{Exist}([u]_i)x^{\sigma([u]_i)},
\end{eqnarray}
where
\begin{equation}
\text{Exist}([u]_i)=
\begin{cases}
1 & \text{if $[u]_i \neq \varepsilon$}, \\
0 & \text{if $[u]_i = \varepsilon$}.
\end{cases}
\end{equation}
and
\begin{eqnarray}
\sigma([u]_i)=2m+1-(i-1).
\end{eqnarray}

Then, we have the following result.

\begin{theo}\label{OJBT_1997}[Oshida's Jet Black Theorem]
\it{
Let $f_i:\mathbb{R}^{2m+1}_{\text{max}} \to \mathbb{R}^{2m+1}_{\text{max}}$ be the function which satisfies
\begin{equation}
f_i(u):=u \otimes A \nonumber
\end{equation}
for $i=1,2,\cdots,m-1$, where $u \in \mathbb{R}^{2m+1}_{\text{max}}$. Let $g_i:F_{2m+1} \to F_{2m+1}$ be the function which satisfies
\begin{equation}
g_i(\color{red} x^{m_1} + x^{m_1-1} + \cdots + x^{m_2+1} + x^{m_2} \color{black}{):= (} \color{blue}{x^{m_1+2} + x^{m_1+1}} ) \color{black} + ( \color{red} x^{m_1} + x^{m_1-1} + \cdots + x^{m_2+1} + x^{m_2} \color{black} ) + \color{blue} x^{m_2} \nonumber
\end{equation}
for $i=1,2,\cdots,m-1$, where $m_1,m_2 \in \mathbb{Z}_{>0}$ with $m_1>m_2$.\par
Then, for the following diagram ''Jet Black$-245$ Imitation Diagram'', we have 
\begin{equation}
h \circ f_i = g_i \circ h
\end{equation}
for $i=1,2,\cdots,m-1$ and we have $h \circ f^{m}_1=g^{m}_1 \circ h$. On the other hand, we can deduce that $A$ is 2m-perfect through the action $g_1$.
\[
  \begin{CD}
     E(A) \\
     @V{p}VV \\
     \{e\} @>{f_1}>> \mathbb{R}^{2m+1}_{\text{max}} @>{f_2}>> \mathbb{R}^{2m+1}_{\text{max}} @>{f_3}>> \cdots @>{f_{m-2}}>> \mathbb{R}^{2m+1}_{\text{max}} @>{f_{m-1}}>> \mathbb{R}^{2m+1}_{\text{max}} @>{f^{m}_1}>> \mathbb{R}^{2m+1}_{\text{max}} \\
  @V{h}VV  @V{h}VV  @V{h}VV  @.  @V{h}VV @V{h}VV @V{h}VV  \\
     F_{2m+1}   @>{g_1}>>  F_{2m+1} @>{g_2}>> F_{2m+1} @>{g_3}>> \cdots @>{g_{m-2}}>> F_{2m+1} @>{g_{m-1}}>> F_{2m+1} @>{g^{m}_1}>> \Bigg \{ \displaystyle \sum_{i=1}^{2m+1} x^i \Bigg \} \\
     @. @. @. @. @. @. @V{q}VV \\
     @. @. @. @. @. @. E(A^{\otimes 2m}) 
  \end{CD}
\]
}
\end{theo}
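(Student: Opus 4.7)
The plan is to view $h$ as the support-indicator polynomial of a max-plus vector under the bijection between position $i$ and exponent $2m+2-i$, and to identify $g_i$ with the polynomial-level effect of right-multiplication by $A$ on such indicators.

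First I would derive the support-transfer rule. Since $[A]_{k,j} \neq \varepsilon$ precisely when $k \in \{j+1, j+2\}$ (with cyclic reduction modulo $2m+1$), we have $[u \otimes A]_j = \max\{[u]_{j+1}+[A]_{j+1,j},\, [u]_{j+2}+[A]_{j+2,j}\}$, so $j \in \mathrm{supp}(u \otimes A)$ iff $j+1$ or $j+2$ lies in $\mathrm{supp}(u)$. Translating through the exponent bijection, each old exponent $e$ produces the two new exponents $e+1$ and $e+2$, reduced via the given relation $x^{2m+i}=x^i$ whenever they overshoot $2m+1$. Summing this rule over a contiguous block $p = \sum_{e=m_2}^{m_1} x^e$ gives the Boolean union $\sum_{e=m_2+1}^{m_1+2} x^e$; a direct rearrangement shows this equals $(x^{m_1+2}+x^{m_1+1}) + p + x^{m_2}$, because in $\mathbb{F}_2$ the appended $x^{m_2}$ cancels the $x^{m_2}$ already in $p$ and the two leading terms extend the block upward by two. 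This is precisely the author's definition of $g_i$, so for block inputs $h(u \otimes A) = g_i(h(u))$ holds identically.

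The commutations $h \circ f_i = g_i \circ h$ for $i = 1, \ldots, m-1$ then follow by induction on $i$. The base case uses $h(e) = x^1+x^2$, corresponding to the first row of $A$ having support $\{2m, 2m+1\}$. At each step, the inductive hypothesis that $h(f_1^k(e))$ is a block $\sum_{e=k+1}^{2k+2} x^e$ is guaranteed by Corollary \ref{CPNocoro} together with the explicit position description $(\ref{CPNoImportant})$ for $k \leq m-1$. For the final arrow $h \circ f_1^m = g_1^m \circ h$, the exponent set $\{k+1, \ldots, 2k+2\}$ begins to wrap around when $k \geq m$; I would track the cyclic block structure directly in positions, apply the elementary monomial rule and reduce via the given relation, and verify that the resulting element of $F_{2m+1}$ agrees with the support indicator at each step. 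After the $2m-1$ total iterations, the cyclic position set covers all of $\{1, \ldots, 2m+1\}$, giving $g_1^{2m-1}(h(e)) = \sum_{i=1}^{2m+1}x^i$; equivalently, the first row of $A^{\otimes 2m}$ has no $\varepsilon$ entry. Applying the rotation symmetry $F_r$ from the proof of Lemma \ref{CPNo} propagates this conclusion to every row, so $A^{\otimes 2m}$ is entry-wise non-$\varepsilon$ and $A$ is $2m$-perfect.

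The main obstacle will be the wraparound phase. In the block regime the $\mathbb{F}_2$-cancellations in $g_1$ align exactly with the Boolean union of supports, so the induction is mechanical. Once exponents exceed $2m+1$ and the identification $x^{2m+i}=x^i$ engages, two independently produced copies of the same exponent could in principle cancel mod $2$ even though the support-union operation would retain that exponent. A careful case check is therefore needed to confirm that, along the specific orbit starting from $h(e) = x^1+x^2$, this pathological cancellation never occurs at intermediate steps — equivalently, that at each stage the growing cyclic arc does not self-overlap in a way that produces doubled exponents until the very last step, where the orbit fills out the full polynomial $\sum_{i=1}^{2m+1} x^i$. This combinatorial compatibility between $\mathbb{F}_2$-arithmetic and support-union semantics is the heart of the argument.
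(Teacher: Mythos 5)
Your proposal is correct in substance and, for the first $m-1$ squares, follows essentially the same route as the paper: there, too, everything reduces to the fact that the support of $e\otimes A^{\otimes k}$ is a contiguous block (Corollary \ref{CPNocoro} and equation (\ref{CPNoImportant})), that under the exponent map $\sigma$ one gets $h(e\otimes A^{\otimes k})=\sum_{i=k+1}^{2k+2}x^i$, and that $g$ extends a block by two at the top and one at the bottom after the $\mathbb{F}_2$ cancellation of $x^{m_2}$. The genuine divergence is in the last square and the perfectness claim. The paper does not deduce $2m$-perfectness from the $g_1$-dynamics: it imports Theorem \ref{PerfectHonest}, proved earlier by a direct matrix argument, to evaluate the $f$-side as $h(e\otimes A^{\otimes(2m-1)})=\sum_{i=1}^{2m+1}x^i$, and separately evaluates $g_1^{m}\bigl(\sum_{i=m}^{2m}x^i\bigr)$ by a formal computation in which exponents above $2m+1$ are reduced only at the end. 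You run the argument in the opposite direction: establish the commutation through the wraparound phase by tracking the cyclic arc of supported positions, conclude that the first row of $A^{\otimes 2m}$ is $\varepsilon$-free, and propagate to all rows via the rotation maps from the proof of Lemma \ref{CPNo}. This yields a non-circular reading of the assertion that $A$'s $2m$-perfectness can be ``deduced through the action $g_1$'' and removes the dependence on Theorem \ref{PerfectHonest}; the price is the wraparound compatibility check you flag but leave open. That check does close, and cheaply: after $j$ further applications of $g_1$ to the block $\sum_{i=m}^{2m}x^i$ the unreduced block is $\sum_{i=m+j}^{2m+2j}x^i$, of length $m+j+1\le 2m+1$ for $j\le m$, so reduction of exponents modulo $2m+1$ never creates a repeated exponent before the final step, and the reduced polynomial is exactly the indicator of the support arc. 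Two small cautions: $g_i$ is defined only on blocks $x^{m_1}+\cdots+x^{m_2}$, so during wraparound you must, as the paper does, keep unreduced exponents and apply the identification only at the end; and the relation as stated in the definition of $F_{2m+1}$ reads $x^{2m+i}=x^i$, whereas both your argument and the paper's own final computation require reduction modulo $2m+1$, i.e.\ $x^{(2m+1)+i}=x^i$.
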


We call $q \circ g^{2m-1}_1 \circ h \circ p$ Jet Black Scheme.

\begin{proof}[Proof of Theorem $\ref{OJBT_1997}$]At first, we assume that $m=2$. Then, we have 
\begin{equation}
e=(0,0,0,[e]_4,[e]_5),
\end{equation}
where $[e]_4<0$ and $[e]_5>0$. By proof of Proposition $\ref{CPNo}$, we have the following diagram.
\[
  \begin{CD}
     \{e\} @>{f_1}>> \{ e \otimes A \} \\
  @V{h}VV  @V{h}VV \\
  \{ x^2+x \}   @. \{ x^4+x^3+x^2 \}
  \end{CD}
\]
Hence we have $h \circ f_1=g_1 \circ h$ by
\begin{eqnarray}
g_1(x^2+x)=(x^4+x^3)+(x^2+x)+x=x^4+x^3+x^2.
\end{eqnarray}
Since
\begin{eqnarray}
g_1(g_1(x^4+x^3+x^2))
&=&g_1((x^6+x^5)+(x^4+x^3+x^2)+x^2) \nonumber \\
&=&g_1(x^5+x^4+x^3+x) \nonumber \\
&=&(x^7+x^6)+(x^5+x^4+x^3+x)+x \nonumber \\
&=&x^5+x^4+x^3+x^2+x, \nonumber
\end{eqnarray}
we have the following diagram by Theorem $5.2$.
\[
  \begin{CD}
     \{ e \otimes A \} @>{f^2_1}>> \{ (e \otimes A) \otimes A^{\otimes 2} = e \otimes A^{\otimes 3} \} \\
  @V{h}VV  @V{h}VV \\
  \{ x^4+x^3+x^2 \}   @>{g^2_1}>> \{ x^5+x^4+x^3+x^2+x \}
  \end{CD}
\]
Hence, we  $h \circ f^2_1=g^2_1 \circ h$. Thus, Theorem $\ref{OJBT_1997}$ holds for $m=2$.\par

Next, we assume that Theorem $\ref{OJBT_1997}$ holds for $m>2$. Let
\begin{equation}
\beta_1(k)=2(m+1)-2k-3
\end{equation}
and
\begin{equation}
\beta_2(k)=2(m+1)-k.
\end{equation}
Then, we have the following diagram by Proposition $\ref{CPNo}$ and Corollary $\ref{CPNocoro}$.
\[
  \begin{CD}
     \{ e \} @>{f_1}>> \{ e \otimes A \} @>{f_2}>> \{ e \otimes A^{\otimes 2} \} @>{f_3}>> \cdots @>{f_{m-1}}>> \{ e \otimes A^{\otimes (m-1)} \} \\
  @V{h}VV  @V{h}VV @V{h}VV @. @V{h}VV \\
  \{ x^2+x \} @. \{ h(e \otimes A) \} @. \{ h(e \otimes A^{\otimes 2}) \} @. @. \{ h(e \otimes A^{\otimes (m-1)}) \}
  \end{CD},
\]
where the polynomial
\begin{equation}
h(e \otimes A^{k})
=\sum_{i=\sigma([e \otimes A^{\otimes k}]_{\beta_2(k)-1})}^{\sigma([e \otimes A^{\otimes k}]_{\beta_1(k)+1})} x^i. \nonumber
\end{equation}
To prove that Theorem $\ref{OJBT_1997}$ holds for $m+1$, we prove below equations at first.
\begin{eqnarray}\label{Uruf}
h(e \otimes A^{\otimes n})
=g_n \circ h (e \otimes A^{\otimes (n-1)})
\end{eqnarray}
If $n=1$, then equations $(\ref{Uruf})$ holds by the following calculation result.
\begin{eqnarray}
h(e \otimes A) 
&=&x^4+x^3+x^2 \nonumber \\
&=&(x^4+x^3)+(x^2+x)+x \nonumber \\
&=&g_1(x^2+x) \nonumber \\
&=&g_1 \circ h (e)
\end{eqnarray}
Next, for $1 \leq n < m-1$, we assume that equations $(\ref{Uruf})$ holds. The polynomial $h(e \otimes A^{\otimes n+1})$ can rewrite as 
\begin{eqnarray}\label{OJBT_Im1}
h(e \otimes A^{\otimes n+1}) 
=\sum_{i=\sigma([e \otimes A^{\otimes {n+1}}]_{\beta_2(n+1)-1})}^{\sigma([e \otimes A^{\otimes {n+1}}]_{\beta_1(n+1)+1})} x^i
=\sum_{i=n+2}^{2n+4} x^i
\end{eqnarray}
becaous of the following calculation of $\sigma([e \otimes A^{\otimes {n+1}}]_{\beta_2(n+1)-1})$ and $\sigma([e \otimes A^{\otimes {n+1}}]_{\beta_1(n+1)+1})$.
\begin{eqnarray}
\sigma([e \otimes A^{\otimes {n+1}}]_{\beta_2(n+1)-1})
&=&2m+1-((\beta_2(n+1)-1)-1) \nonumber \\
&=&2m+1-(\beta_2(n+1)-2) \nonumber \\
&=&2m+1-(2(m+1)-(n+1)-2) \nonumber \\
&=&2m+1-((2m+1)-(n+1)-1) \nonumber \\
&=&2m+1-(2m+1)+(n+1)+1 \nonumber \\
&=&n+2 \nonumber
\end{eqnarray}
\begin{eqnarray}
\sigma([e \otimes A^{\otimes {n+1}}]_{\beta_1(n+1)+1})
&=&2m+1-((\beta_1(n+1)+1)-1) \nonumber \\
&=&2m+1-\beta_1(n+1) \nonumber \\
&=&2m+1-(2(m+1)-2(n+1)-3) \nonumber \\
&=&2m+1-((2m+1)-2(n+1)-2) \nonumber \\
&=&2m+1-(2m+1)+2(n+1)+2  \nonumber \\
&=&2n+4. \nonumber
\end{eqnarray}
Hence, the polynomial $(\ref{OJBT_Im1})$ can calculate as follows.
\begin{eqnarray}
h(e \otimes A^{\otimes n+1})
=\sum_{i=n+2}^{2n+4} x^i
&=&(x^{2n+4} + x^{2n+3}) + \sum_{i=n+2}^{2n+2} x^i \nonumber \\
&=&(x^{2n+4} + x^{2n+3}) + \Biggr ( \sum_{i=n+2}^{2n+2} x^i \Biggr ) + x^{n+1} + x^{n+1} \nonumber \\
&=&(x^{2n+4} + x^{2n+3}) + \Biggr ( \sum_{i=n+1}^{2n+2} x^i \Biggr ) + x^{n+1} \nonumber \\
&=&g_{n+1} \Biggr ( \sum_{i=n+1}^{2n+2} x^i \Biggr ) \nonumber \\
&=&g_{n+1} \circ h (e \otimes A^{\otimes n}). \nonumber
\end{eqnarray}
Thus, the equation $(\ref{Uruf})$ holds.\par
Next, we can calculate $h \circ f^{m}_1(e \otimes A^{\otimes (m-1)})$ as follows by Theorem $5.2$.
\begin{equation}
h \circ f^{m}_1(e \otimes A^{\otimes (m-1)})
=h ( f^{m}_1(e \otimes A^{\otimes (m-1)}) )
=h ( \color{red} e \otimes A^{\otimes (2m-1)} \color{black} )
=\sum_{i=1}^{2m+1} x^i. \nonumber
\end{equation}
Note that the set $\{ e \otimes A^{\otimes (2m-1)} \}$ is subset of  the set of all row vectors that make up the matrix $A^{\otimes 2m}$. Hence, we have
\begin{eqnarray}
g^m_1 \circ h (e \otimes A^{\otimes (m-1)})
=g^m_1 ( h (e \otimes A^{\otimes (m-1)}) )
&=&g^m_1 \Biggr ( \sum_{i=m}^{2m} x^i \Biggr ) \nonumber \\
&=&g_1^{ \color{blue} m \color{black} } \Biggr ( x^{2m} + \color{blue}  \sum_{i=m}^{ 2m-1 } \color{black} x^i \Biggr ) \nonumber \\
&=&\Biggr ( \color{blue}  \sum_{i=1}^{m} \color{black} x^{2m+(2i-1)} + x^{2m+2i} \Biggr ) + \Biggr ( x^{2m} + \sum_{i=m}^{2m-1} x^i \Biggr ) + \Biggr ( \color{blue}  \sum_{i=m}^{2m-1} \color{black} x^i \Biggr ) \nonumber \\
&=&\Biggr ( \sum_{i=2m+1}^{4m} x^{i} \Biggr ) + \Biggr ( x^{2m} + \color{red} \sum_{i=m}^{2m-1} x^i \color{black} \Biggr ) + \Biggr ( \color{red} \sum_{i=m}^{2m-1} x^i \color{black} \Biggr ) \nonumber \\
&=&x^{2m}  + \sum_{i=2m+1}^{4m} x^{i} \nonumber \\
&=&x^{2m+1} + x^{2m} + \color{blue} \sum_{i=2m+2}^{4m} x^{ i } \nonumber \\
&=&x^{2m+1} + x^{2m} + \color{blue} \sum_{i=1}^{2m-1} x^{2m+1+i} \nonumber \\
&=&x^{2m+1} + x^{2m} + \color{blue} \sum_{i=1}^{2m-1} x^{i} \nonumber \\
&=&\sum_{i=1}^{2m+1} x^{i} \nonumber \\
&=&h \circ f^{m}_1(e \otimes A^{\otimes (m-1)}). \nonumber
\end{eqnarray}
Thus, Theorem $\ref{OJBT_1997}$ holds for $m+1$. Therefore, Theorem $\ref{OJBT_1997}$ holds for all $m>1$.
\end{proof}

Finally, we introduce the importance conjecture of a One-Dimensional Cellular Automaton.

\begin{equation}
\text{CA}(t)=\{ (U_{t,1},\cdots,U_{t,2m+1}) \mid \text{$U_{t,i} \in \mathbb{F}_2$ for $i=1,2,\cdots,2m+1$} \} \nonumber
\end{equation}

For the vector $\Lambda \in \text{CA}(t)$, let $S:\text{CA}(t) \to \text{CA}(t)$ be the function defined as follows.
\begin{equation}
[S(\Lambda)]_j=
\begin{cases}
[\Lambda]_j + \bar{1} & \text{if $j=\max \{ i \mid [\Lambda]_i \neq 0 \}$}, \\
[\Lambda]_j & \text{otherwise}.
\end{cases}
\end{equation}
Let $L:\text{CA}(t) \to \text{CA}(t)$ be the function defined by
\begin{equation}
[L(\Lambda)]_j=
\begin{cases}
[\Lambda]_j + \bar{1} & \text{if $j=\min \{ i \mid [\Lambda]_i \neq 0 \}+1$}, \\
[\Lambda]_j + \bar{1} & \text{if $j=\min \{ i \mid [\Lambda]_i \neq 0 \}+2$}, \\
[\Lambda]_j & \text{otherwise}.
\end{cases},
\end{equation}
where $[L(\Lambda)]_j:=[L(\Lambda)]_{j-(2m+1)}$ for $j>2m+1$. Let $h':F_{2m+1} \to \text{CA}(t)$ be the function by defined by
\begin{equation}
\Biggr [h' \Biggr ( \sum_{i=1}^{2m+1} c_i x^i \Biggr ) \Biggr ]_j
=
c_j
.
\end{equation}
Then, we have 
\begin{equation}\label{Oshida-Rule Before}
h' \circ h \circ \underbrace{ f_1 \circ \cdots \circ f_1 }_{2m-1}(e)=\underbrace{ H_{245} \circ \cdots \circ H_{245} }_{2m-1} \circ h'  \circ h (e)
\end{equation}
by the following function.
\begin{equation}\label{Oshida-Rule}[Oshida-Rule] \quad
H_{245}=L \circ S
\end{equation}

About the above rules, we can consider the following two conjectures.

\begin{conj}[Initial value problem of Oshida-Rule]\it{
Let $n$ be the non-negative intger. Then, for the vector $u \in \mathbb{R}^{n}_{\text{max}}$, we have
\begin{equation}
h' \circ h \circ \underbrace{ f_1 \circ \cdots \circ f_1 }_{2m-1}(u) \neq \underbrace{ H_{245} \circ \cdots \circ H_{245} }_{2m-1} \circ h'  \circ h (u)
\end{equation}
when $u \neq e$. In other words, the following equality holds.
\begin{equation}
\{ u \in \mathbb{R}^{n}_{\text{max}} \mid \text{$u$ satisfies $h' \circ h \circ \underbrace{ f_1 \circ \cdots \circ f_1 }_{2m-1}(u) = \underbrace{ H_{245} \circ \cdots \circ H_{245} }_{2m-1} \circ h'  \circ h (u)$.} \} = \{ e \}. \nonumber
\end{equation}
}
\end{conj}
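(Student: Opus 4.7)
The plan is to reduce the conjecture to a purely combinatorial statement about binary patterns in $\text{CA}(t)$. The crucial observation is that in max-plus algebra, $\max\{a_1, \ldots, a_k\} = \varepsilon$ holds if and only if every $a_i = \varepsilon$; consequently, whether $[u \otimes A]_j$ equals $\varepsilon$ depends only on the supports of $u$ and $A$, not on numerical values. Hence the composed map $h' \circ h \circ f_1$ factors through $h' \circ h$, and inductively so does $h' \circ h \circ f_1^{2m-1}$. This reduces the conjecture to the purely combinatorial question: among all patterns $\Lambda \in \text{CA}(t)$, which ones satisfy $T^{2m-1}(\Lambda) = H_{245}^{2m-1}(\Lambda)$, where $T$ is the support-evolution rule $[T(\Lambda)]_j = [\Lambda]_{j-1} \vee [\Lambda]_{j-2}$ with cyclic indexing modulo $2m+1$?

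Next I would contrast the two rules sharply. The rule $T$ is monotone non-decreasing and spreads the support rightward by one or two positions per step; by Theorem \ref{PerfectHonest}, any nontrivial initial pattern saturates to the all-ones pattern in at most $2m$ iterations. By contrast, $H_{245} = L \circ S$ is XOR-based and toggles at most three positions per step (the max, and min$+1$, min$+2$), so its Hamming weight changes by at most $3$ per iteration. The coincidence at $\Lambda = h' \circ h(e)$ is precisely equation $(\ref{Oshida-Rule Before})$, and it reflects the fact that $e$ has the unique initial support pattern for which the local toggling rule $H_{245}$ happens to reproduce the global spreading behavior of $T$ for $2m-1$ consecutive steps.

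The core of the argument would then show that any other initial pattern $\Lambda \neq h' \circ h(e)$ produces a discrepancy within the first $2m-1$ iterations. Patterns with strictly more initial ones than $h' \circ h(e)$ saturate strictly faster under $T$ than under $H_{245}$, since the latter's weight growth is bounded while $T$'s can nearly double; patterns with fewer ones, or with ones placed noncontiguously, break the delicate matching that relies on the contiguous-interval structure exploited in the proof of Theorem \ref{OJBT_1997} (where $g_1$ acts only on polynomials of the form $x^{m_1} + x^{m_1-1} + \cdots + x^{m_2}$). Tracking the leftmost and rightmost nonzero positions modulo $2m+1$, together with the Hamming weight, should provide the necessary discriminating invariant.

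The hardest part will be the final case analysis: excluding cyclic shifts, rotations, and small perturbations of $h' \circ h(e)$ that might accidentally yield a coinciding orbit for exactly $2m-1$ steps. I anticipate that this will require an induction on $m$ combined with an explicit verification for small $m$; the statement appears as a conjecture rather than a theorem precisely because this case analysis is intricate. One should also be aware of a subtlety forced by the reduction above: since $h' \circ h$ forgets numerical values, the set on the left-hand side of the conjecture is necessarily a union of fibres of $h' \circ h$, so the conjecture is really asserting uniqueness of the pattern $h' \circ h(e) \in \text{CA}(t)$; once rephrased this way, the plan above applies directly and the literal set-equality in the statement should be read in this sense.
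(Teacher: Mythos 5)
You should first be aware that the paper itself gives no proof of this statement: it is posed explicitly as a conjecture (the author only ``believes'' it), so your text has to stand entirely on its own, and as written it is a programme rather than a proof. Your one solid step is the observation that in $\mathbb{R}_{\text{max}}$ one has $\max\{a,b\}=\varepsilon$ iff $a=b=\varepsilon$, so the support of $u\otimes A^{\otimes k}$ depends only on the support of $u$, and hence both sides of the claimed identity factor through $h'\circ h$. But notice what this does to the statement: every vector with the same support as $e$ (for instance $\lambda\otimes e$ for any real $\lambda\neq 0$) then satisfies the equation, so the asserted set equality $\{u:\dots\}=\{e\}$ is literally false, and your reduction, taken at face value, refutes the conjecture as stated rather than proving it. You acknowledge this and quietly replace the conjecture by a uniqueness claim about a single support pattern in $\mathrm{CA}(t)$; that is the only sensible reading, but it is a different statement, and you should say explicitly that the literal one fails.

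For the reformulated statement the proof is missing exactly where it is hard. The conjecture compares the two evolutions only after $2m-1$ steps, so ``any other initial pattern produces a discrepancy within the first $2m-1$ iterations'' is not the right target: intermediate discrepancies could cancel by step $2m-1$. Since the support evolution $T$ is monotone and, by Theorem \ref{PerfectHonest}, saturates to the all-ones pattern by step $2m$ from far more initial supports than that of $e$, the real question is which patterns $\Lambda$ satisfy $H_{245}^{2m-1}(\Lambda)=T^{2m-1}(\Lambda)$, and your Hamming-weight bound does not settle it: $H_{245}=L\circ S$ changes the weight by $-3$, $-1$ or $+1$ per step, so from any weight $w\geq 2$ the all-ones pattern is reachable in $2m-1$ steps as far as weight counting is concerned; a finer invariant (exact increments, parity, and the tracked positions of $\min$ and $\max$ of the support) would have to be analysed, and you explicitly defer this case analysis together with the induction on $m$. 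There is also a small slip in the reduction itself: since $[A]_{i,j}\neq\varepsilon$ exactly for $i=j+1,j+2$, the support rule for $u\mapsto u\otimes A$ is $[T(\Lambda)]_j=[\Lambda]_{j+1}\vee[\Lambda]_{j+2}$ with indices mod $2m+1$, not $[\Lambda]_{j-1}\vee[\Lambda]_{j-2}$; the direction reverses only after composing with the index-reversal $\sigma$ inside $h$. In short: the factorization through supports is correct and useful, but it simultaneously contradicts the literal conjecture and leaves the reinterpreted combinatorial core unproved.
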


\begin{conj}\label{FNOR}[Non-commutativity and Linguization of Oshida-Rule]\it{
Let $A$ be the honest matrix for $m \in \mathbb{Z}_{>1}$. Then, we can not  prove that $(\ref{Oshida-Rule Before})$ holds by matrixs $A$. On other words, we can not prove that we can construct $H_{245}$ by using matrixs $A$.
}
\end{conj}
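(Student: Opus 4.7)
The plan is to deduce Conjecture $\ref{FNOR}$ from the fundamentally non-linear (in the max-plus sense) character of $H_{245}$ relative to the algebraic structure carried by $A$. Any ``construction of $H_{245}$ by matrixs $A$'' that one would reasonably admit must take the form $T(u) = M_1(A) \otimes \cdots \otimes M_k(A) \otimes u$ for fixed max-plus polynomials $M_i$ in $A$; the first step of the argument would therefore be to pin down this class precisely, since the conjecture as stated is informal and half of the work lies in choosing a formalization that is both reasonable and strong enough to turn ``we cannot construct $H_{245}$'' into a meaningful mathematical assertion.

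With such a formalization in hand, the next step is to exploit the distributivity $T(u \oplus v) = T(u) \oplus T(v)$ of max-plus matrix action. After projection through $h' \circ h$, which records only support information, this forces any matrix-induced map on $\text{CA}(t)$ to behave in a very specific monotone-additive way with respect to disjoint supports. By contrast, $H_{245} = L \circ S$ is defined through $\max \{ i \mid [\Lambda]_i \neq 0 \}$ and $\min \{ i \mid [\Lambda]_i \neq 0 \}$, and neither of these two functionals decomposes over disjoint unions of supports, so $H_{245}$ will in general fail the additivity that any candidate $T$ must satisfy.

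The decisive step is then to exhibit an explicit pair of vectors $u, v \in \mathbb{R}^{2m+1}_{\text{max}}$ with disjoint supports for which $H_{245}(h' \circ h(u \oplus v))$ differs, at some coordinate, from $H_{245}(h' \circ h(u)) \oplus H_{245}(h' \circ h(v))$ in $\mathbb{F}_2^{2m+1}$. One picks $u$ and $v$ supported at the extreme ends of the index range so that $S$ and $L$ each act on a different position in $u \oplus v$ than in $u$ or $v$ separately, and one then checks that this mismatch survives the projection $h' \circ h$ — this is the concrete calculation that turns the abstract non-linearity into a contradiction.

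The main obstacle is the formalization step: the conjecture quantifies over an open-ended class of constructions, and a rigorous negative result must rule out not only the obvious candidate $u \mapsto u \otimes A$ but every max-plus polynomial expression in $A$. A secondary obstacle is that $H_{245}$ does coincide with the matrix action along the orbit $\{ e , e \otimes A , e \otimes A^{\otimes 2} , \ldots \}$ by equation $(\ref{Oshida-Rule Before})$, so the distinguishing vectors must lie outside this orbit; the companion initial-value conjecture already suggests that agreement holds only there, which indicates that the required counterexamples come from choosing $u \neq e \otimes A^{\otimes k}$ and that the two conjectures are essentially two sides of the same phenomenon.
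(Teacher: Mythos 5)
There is nothing in the paper to compare your argument with: Conjecture \ref{FNOR} is left open, the author only states a belief in it, so any proof here is new material and must stand entirely on its own. Your text does not yet do that — it is a plan rather than a proof. The decisive steps are all deferred: the class of admissible constructions ($T(u)=M_1(A)\otimes\cdots\otimes M_k(A)\otimes u$) is proposed but never fixed or argued to exhaust what ``construct $H_{245}$ by using matrixs $A$'' could mean, and the explicit counterexample pair $u,v$ is only described (``supported at the extreme ends''), with the verification that the mismatch survives $h'\circ h$ left unperformed. You yourself identify the formalization as the main obstacle; but since the statement is an unprovability claim, no counterexample-style argument can settle it until that framework is pinned down, and ruling out your one class of constructions is strictly weaker than what the conjecture informally asserts.

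Two further concrete problems with the intended non-additivity argument. First, the identity $(\ref{Oshida-Rule Before})$ whose ``unprovability from $A$'' is at stake involves only the single vector $e$ and its orbit $e, e\otimes A, e\otimes A^{\otimes 2},\ldots$, where Theorem \ref{OJBT_1997} shows the matrix action and $H_{245}$ do agree after projection; exhibiting vectors off that orbit on which $H_{245}$ fails additivity would show at most that no max-plus matrix globally intertwines with $H_{245}$ via $h'\circ h$, which does not address whether the orbit identity itself can be derived from $A$. Second, the additivity you should be testing is the wrong one as written: if $H_{245}(h'\circ h(u))=h'\circ h(T(u))$ for a max-plus linear $T$, then since the support of $u\oplus v$ is the union of supports and max-plus multiplication admits no cancellation, the induced constraint is union-additivity (coordinatewise OR) of $H_{245}$ on indicator vectors, not addition in $\mathbb{F}_2^{2m+1}$ (XOR); for disjoint input supports the two coincide on the input side but not on the outputs, where $H_{245}(h'\circ h(u))$ and $H_{245}(h'\circ h(v))$ may overlap. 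The heuristic itself — that $S$ and $L$ depend on the global max and min of the support and hence resist Boolean-linear realization — is plausible and worth developing, but as it stands the proposal neither fixes the statement to be refuted nor carries out the calculation, so it does not prove the conjecture.
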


I believe in positive for above two conjectures. Note that it is very obvious that we can not prove that we can construct $H_{245}$ using matrixs $g_1$.

\end{document}